\global\let\AddToReset=\@addtoreset}
\newtheorem{theorem}{Theorem}[section]
\newtheorem{lemma}{\bf Lemma}[section]
\newtheorem{@definition}{\sc Definition}[section]
\newtheorem{@remark}{\sc Remark}[section]
\newtheorem{@example}{\sc Example}[section]
\newcommand{\beqn}{\begin{displaymath}}
\newcommand{\eeqn}{\end{displaymath}}
\newcommand{\beq}{\begin{equation}}  
\newcommand{\eeq}{\end{equation}}
\def\mathsf{\bf}
\def\N{\mathbb{N}}
\def\R{\mathbb{R}}
\def\Z{\mathbb{Z}}
\def\E{\mathrm E}
\def\P{\mathrm P}
\def\text{\mbox}
\def\1{{\bf 1}}
\newcommand{\Var}{\mbox{Var}}
\newcommand{\Cov}{\mbox{Cov}}
\DeclareMathOperator*{\argmin}{arg\,min}
\def\limiteloin{\renewcommand{\arraystretch}{0.5}
\begin{array}[t]{c}
\stackrel{{\cal D}}{\longrightarrow} \\
{\scriptstyle n \rightarrow\infty}
\end{array}\renewcommand{\arraystretch}{1}}
\def\limiteprobamn{\renewcommand{\arraystretch}{0.5}
\begin{array}[t]{c}
\stackrel{{\cal P}}{\longrightarrow} \\
{\scriptstyle m, \, n/m \rightarrow\infty}
\end{array}\renewcommand{\arraystretch}{1}} 
\def\limiteproban{\renewcommand{\arraystretch}{0.5}
\begin{array}[t]{c}
\stackrel{{\cal P}}{\longrightarrow} \\
{\scriptstyle n \rightarrow\infty}
\end{array}\renewcommand{\arraystretch}{1}}
\def\limiteprobanm{\renewcommand{\arraystretch}{0.5}
\begin{array}[t]{c}
\stackrel{{\cal P}}{\longrightarrow} \\
{\scriptstyle n,\, m ,\, n/m \rightarrow\infty}
\end{array}\renewcommand{\arraystretch}{1}}
\def\limiten{\renewcommand{\arraystretch}{0.5}
\begin{array}[t]{c}
\stackrel{}{\longrightarrow} \\
{\scriptstyle n\rightarrow\infty}
\end{array}\renewcommand{\arraystretch}{1}}
\def\limitemn{\renewcommand{\arraystretch}{0.5}
\begin{array}[t]{c}
\stackrel{}{\longrightarrow} \\
{\scriptstyle n,\,m,\,n/m\rightarrow\infty}
\end{array}\renewcommand{\arraystretch}{1}}
\def\limitet{\renewcommand{\arraystretch}{0.5}
\begin{array}[t]{c}
\stackrel{}{\longrightarrow} \\
{\scriptstyle t\rightarrow\infty}
\end{array}\renewcommand{\arraystretch}{1}}
\def\limitet0{\renewcommand{\arraystretch}{0.5}
\begin{array}[t]{c}
\stackrel{}{\longrightarrow} \\
{\scriptstyle t\rightarrow 0}
\end{array}\renewcommand{\arraystretch}{1}}
\newtheorem{rem}{Remark}
\newtheorem{cor}{Corollary}
\def\Cov{\mathrm{Cov}}
\journal{Stochastic Processes and Applications}
\begin{document}
\begin{frontmatter}

\title{Data-driven semi-parametric detection of multiple changes in long-range dependent processes}

\author{Jean-Marc Bardet and Abdellatif Guenaizi} 
\address{{\tt bardet@univ-paris1.fr}, {\tt Abdellatif.Guenaizi@malix.univ-paris1.fr} \\
~\\ 
SAMM, Universit\'e Panth\'eon-Sorbonne (Paris I), 90 rue de Tolbiac, 75013 Paris, FRANCE}

\begin{abstract}
This paper is devoted to the offline multiple changes detection for long-range dependence processes. 
The observations are supposed to satisfy a semi-parametric long-range dependence assumption with distinct memory parameters on each stage. A penalized local Whittle contrast is considered for estimating all the parameters, notably the number of changes. The consistency as well as convergence rates are obtained.  Monte-Carlo experiments exhibit the accuracy of the estimators. They also show that the estimation of the number of breaks is improved  by using a data-driven slope heuristic procedure of choice of the penalization parameter.
\end{abstract}
\begin{keyword}
62M10 \sep  62M15 \sep 62F12.
\end{keyword}
\end{frontmatter}

\section{Introduction}
There exists now a very large literature devoted to long-range dependent processes. The most commonly used definition of long-range dependency requires a second order stationary process $X=(X_n)_{n\in \Z}$ with spectral density $f$ such as:
\begin{equation}\label{density}
f(\lambda)=|\lambda| ^{-2d} \, L\big ( |\lambda|\big )\quad \mbox{for any $\lambda \in [-\pi, \pi]$},
\end{equation}
where $L$ is a positive slow varying function, satisfying for any $c>0$, $\lim_{h \to 0} \frac {L(c \, |h|)}{L(|h|)}=1$, typically $L$ is a function with a positive limit or a logarithm. \\
From an observed trajectory $(X_1,\ldots,X_n)$ of a long-range dependent process, the estimation of the parameter $d$ is an interesting statistical question. The case of a parametric estimator for which the explicit expression of the spectral density $f$ is known, was successively solved in many cases using maximum likelihood estimators (see for instance Dahlhaus, 1989) or Whittle estimators (see for instance Fox and Taqqu, 1987, Giraitis and Surgailis, 1990, or Giraitis and Taqqu, 1999).  \\
However, with numerical applications in view, knowing the explicit form of the spectral density is not a realistic framework. A semi-parametric estimation of $d$ where only the behaviour \eqref{density} is assumed should be preferred. Thus, numerous semi-parametric estimators of $d$ were defined and studied, the main ones being the log-periodogram (see Geweke and Porter Hudak, 1987, or Robinson, 1995a), the wavelet based (see Bardet {\it et al.}, 2000) and the local Whittle estimators (see Robinson 1995b). \\
This last one is a version of the Whittle estimator for which only asymptotically small frequencies are considered. It provides certainly the best trade-off between computation time and accuracy of the estimation (see for instance Bardet {\it et al.}, 2003b). Its asymptotic normality was extended for numerous kinds of long-memory processes (see Dalla {\it et al.}, 2006) and also non-stationary processes (see Abadir {\it et al.}, 2006). However there is still not satisfactory adaptive method of choice of the bandwidth parameter even if several interesting attempts have been developed (see for instance Henry and Robinson, 1998, or Henry, 2007). Hence, the usual choice valid for FARIMA or Fractional Gaussian noise is commonly chosen. \\
~\\
In this paper we consider the classical framework of offline multiple change detection. It consists on the observed trajectory $(X_1,\ldots,X_n)$ of a process $X$ whose trajectory is partitioned into $K^*+1$ subtrajectories on which it is a linear long memory process whose long memory parameters are distinct from one area to another (see a more precise definition in \eqref{deflin}). Thus there is dependence between two subtrajectories since all the different linear processes are constructed from the same white noise. The aim of this paper is to present a method for estimating from $(X_1,\ldots,X_n)$ the number $K^*$ of abrupt changes, the $K^*$ change-times $(t_1^*, \ldots,t_{K^*}^*)$ and the $K^*+1$ different long-memory parameters $(d_1^*,\ldots,d_{K^*+1}^*)$, which are unknown. \\
The framework of ``offline'' multiple changes we chose, has to be distinguished from that of the ``online'' one, for which a monitoring procedure is adopted and test of detection of change is successively applied (such as CUSUM procedure). The book of Basseville and Nikiforov (1993) is a good reference for an introduction on both online and offline methods. There exist several methods for building a sequential detector of long-range memory, see for instance Giraitis {\it et al.} (2001), Kokoszka and Leipus (2003) or Lavancier {\it et al.} (2013).  \\
For our offline framework, following the previous purposes, we chose to build a penalized contrast based on a sum successive local Whittle contrasts and to minimize it. The principle of this method, minimizing a penalized contrast, provides very convincing results in many frameworks: in case of mean changes with least squares contrast (see Bai, 1998), in case of linear models changes with least squares contrast (see Bai and Perron, 1998, generalized by Lavielle, 1999, and Lavielle and Moulines, 2000) or least absolute deviations (see Bai, 1998), in case of spectral densities changes with usual Whittle contrasts (see Lavielle and Ludena, 2000), in case of time series changes with quasi-maximum likelihood (see Bardet {\it et al.}, 2012),... Clearly, the remarkable paper of Lavielle and Ludena (2000) was the model of this article except that we used a semi-parametric version of their Whittle contrast with the local Whittle contrast, and this engenders additional difficulties... \\
Restricting our paper to long-memory linear processes, we obtained several asymptotic results. First the consistency of the estimator has been established under assumptions on the second order term of the expansion of the spectral density close to $0$. A convergence rate of the change times estimators is also provided, but we are not able to reach the usual ${\cal O}_{\P}(1)$ converge rate, which is obtained for instance in the parametric case (see Lavielle and Ludena, 2000).\\
Monte-Carlo experiments illustrate the consistency of the estimators. When the number of changes is known, the theoretical results concerning the consistencies of the estimator are satisfying and $n=5000$ provides very convincing results while they are still mediocre for $n=2000$ and bad for $n=500$. This is not surprising since we considered a semi-parametric statistical framework. When the number of changes is unknown, although we chose an asymptotically consistent choice of penalization sequence, the consistency is not satisfying even for large sample such as $n=5000$. The accuracy of the number of changes estimator is extremely dependent on the precise choice of the penalization sequence, even if this choice should not be important asymptotically. Then we chose to use a data-driven procedure for computing ``optimal'' penalty, the so-called ``Slope Heuristic'' procedure defined in Arlot and Massart (2009). It provides more accurate results than with a fixed penalization sequence and it leads to very convincing results when $n=5000$. \\
~\\
The following Section \ref{defpte} is devoted to define the framework and the estimator. Its asymptotic properties are studied in Section \ref{asympto}. The concrete
estimation procedure and numerical applications are presented in
Section \ref{simu}. Finally, Section \ref{proofs} contains the main
proofs.
\section{Definitions and assumptions}\label{defpte}
\subsection{The multiple changes framework}
We consider in the sequel the case of multiple change long-range dependent linear processes. First we define a class $L(d,\beta,c)$ of real sequences, where $d\in [0,1/2)$, $\beta \in (0,2]$ and $c>0$:\\
~\\
{\bf Class $L(d,\beta,c)$:~} {\em A sequence $(a_i)_{i\in \N} \in \R^\N$ belongs to the class $L(d,\beta,c)$ if 
\begin{itemize}
\item  $|a_n|=c\, n^{d-1}+ O\big (n^{d-1-\beta} \big) $ when $n\to \infty$;
\item $\frac {\partial}{\partial \lambda} \alpha(\lambda)=O\big (\big |\lambda^{-1} \, \alpha(\lambda)\big | \big )$ when $\lambda\to 0^+$ with $\alpha(\lambda)=\sum_{j=0}^\infty a_j e^{ij\lambda}$.\\
\end{itemize}}
\noindent Note that the class $L(d,\beta,c)$ is included in $\ell^2(\R)$, the Hilbert space of square summable sequences.  \\
Now, for $(a_i)_{i\in \N} \in \R^n$ a sequence of the class $L(d,\beta,c)$, it is possible to define a second order linear long-range dependent process. Indeed, with $(\varepsilon_t)_{t\in \Z}$  a sequence of independent and identically distributed random variables (iidrv) with zero mean and unit variance, we can define 
$Y=(Y_k)_{ k \in \Z}$ such as
$$
Y_k=\sum_{j=0}^\infty a_j \, \varepsilon_{k-j} \qquad\mbox{for $k \in \Z$}.
$$
Note that $Y$ is a zero mean stationary process, with autocovariance $r(k)=\E (Y_0Y_k)$ satisfying 
\begin{equation}\label{covr}
r(n)=c^2 B(1-2d,d)\, n^{2d-1}+ O\big ( n^{2d-2-\beta}\big )\quad \mbox{when $u\to \infty$},
\end{equation}
with $B(u,v)$ the usual Beta function (see for instance Inoue, 1997). It is also possible to define the spectral density $f$ of $Y$ in $[-\pi,0)\cup(0, \pi]$ and it satisfies for $d \in (0,1/2)$
\begin{eqnarray}\label{Spectral}
f(\lambda) =  \frac {c^2}\pi \, B(1-2d,d) \, \Gamma(2d)\, \sin \Big (\frac \pi 2 -\pi d \Big )\, \big |\lambda \big |^{-2d}+O\big (|\lambda|^{-2d+\beta}\big )\quad \mbox{when $\lambda \to 0$},
\end{eqnarray} 
using the Tauberian Theorem in Zygmund (1968) and with $\Gamma(u)$ the usual Gamma function. By the way, we can also write that there exists $c'>0$ such as
\begin{eqnarray}\label{AssumptionS}
f(\lambda) = c'\, \big |\lambda \big |^{-2d}+O\big (|\lambda|^{-2d+\beta}\big )\quad \mbox{when $\lambda \to 0$},
\end{eqnarray}
that is the classical assumption required for instance in Robinson (1995b).\\
~\\
Using these definitions, we are going to give the following assumption satisfied by the trajectory $(X_1,\ldots,X_n)$ of the process $X$ from we study the changes: \\
~\\
{\bf Assumption $A$:} {\it Let $(\varepsilon_t)_{t\in \Z}$ be a sequence of iidrv with zero mean and unit variance. Denote also:
\begin{itemize}
\item $K^* \in \{0,\ldots,n-1\}$, $\tau^*_0=0<\tau^*_1<\cdots<\tau^*_{K^*}<1=\tau^*_{K^*+1}$; 
\item $(d_i^*)_{1\leq i \leq K^*+1} \in [0,1/2)^{K^*+1}$, $(c_i^*)_{1\leq i \leq K^*+1} \in (0,\infty)^{K^*+1}$ and $(\beta_i^*)_{1\leq i \leq K^*+1} \in (0,2] ^{K^*+1}$
\item $K^*+1$ sequences $(a^{(i)}_t)$ such as $ (a^{(i)}_t)_{t\in \N}$ belongs to the class $L(d_i^*,\beta_i^*,c_i^*)$ for all $i=1,\cdots,K^*+1$.
\end{itemize}
Define the process $X=(X_t)_{1\leq t \leq n}$ such as
\begin{enumerate} 
\item for $i=1,\cdots,K^*+1$,  
\begin{eqnarray}\label{deflin}
X_t=\sum_{j=0}^\infty a^{(i)}_j \, \varepsilon _{t-j}\quad \mbox{when $[n\tau_{i-1}^*]+1 \leq t \leq [n\tau_{i}^*]$.}
\end{eqnarray} 
\item For $i=1,\cdots,K^*$, $d^*_{i+1}-d^*_i\neq 0$ and denote
\begin{eqnarray}\label{Deltad}
\Delta_d=\max_{1\leq i \leq K^*} \big |d^*_{i+1}-d^*_i \big |>0.
\end{eqnarray}
\end{enumerate}}
The first condition \eqref{deflin} is relative to the behavior $(X_t)$ in each stage: it is a stationary linear long-range process with a spectral density satisfying \eqref{AssumptionS} (where $d=d_i^*$). Moreover there also exists a dependence  for $(X_t)$ from one stage to another one (see for instance the proof of Lemma \ref{lem} where the covariance between two subtrajectories of $X$ is computed in \eqref{tt'}), which makes the model much more realistic than if the independence of successive regimes had been assumed. The second condition \eqref{Deltad} is  the key condition insuring that the framework is the one of multiple long-range dependence change. 
\subsection{Definition of the estimator}
First we will add other notation: \\
~\\
For $X$ satisfying Assumption A, denote: 
\begin{itemize}
 \item $t_i^*=[n\tau_i^*]$, $T^*_i=\big \{t^*_{i-1}+1,t^*_{i-1}+2,\cdots,t^*_{i} \big \}$ and $n^*_i=t^*_i-t^*_{i-1}$ for $i=1,\ldots,K^*+1$.
\end{itemize} 
More generally, for $K \in \{0,1,\cdots,n-1\}$ and $t_0=1<t_1<\cdots <t_K<t_{K+1}=n$, 
\begin{itemize}
\item  denote $T_i=\big \{t_{i-1}+1,t_{i-1}+2,\cdots,t_{i} \big \}$ and $n_i=t_i-t_{i-1}$ for $i=1,\ldots,K+1$. 
\item denote $T_{ij}=\big \{t_{i-1}+1,t_{i-1}+2,\cdots,t_{i} \big \}\cap \big \{t^*_{j-1}+1,t^*_{j-1}+2,\cdots,t^*_{j} \big \} $ and $n_{ij}=\# \{ T_{ij} \}$ for $i=1,\ldots,K+1$ and $j=1,\ldots,K^*+1$.
\end{itemize}
For $K \in \{0,\ldots,n\}$, we will also use the following multidimensional notation: 
\begin{itemize}
\item ${\bf d}=(d_1,\cdots,d_{K+1})$ and ${\bf d^*}=(d_1^*,\cdots,d^*_{K^*+1})$, 
\item ${\bf t}=(t_1,\cdots,t_{K})$, $ {\bf t^*}=(t_1^*,\cdots,t^*_{K^*})$ and $\boldsymbol{\tau^*}= (\tau^*_1,\ldots,\tau^*_{K^*})$.
\end{itemize}
\noindent  From Assumption A, denote by $I_{T}$ the periodogram of $X$ on the set $T$ where $T \subset \{1,\ldots,n\}$, and denote $|T|=\#\{T \}$:
\begin{eqnarray} \label{defwI}
I_T(\lambda)=\frac{1}{2\pi \,|T|}\, \Big|\sum_{k \in T}X_{k}e^{-i\,k \, \lambda}\Big|^{2}.
\end{eqnarray}
~\\
\noindent Using the seminal papers of Kunsh (1987), Robinson (1995b) and Robinson and Henry (2003), we define a local Whittle estimator of $d$. For this, define for $T \subset \{ 1,\cdots,n\}$, $d \in \R$ and $m\in \{1, \cdots,n\}$,
\begin{eqnarray} \label{defW}
&& W_{n}(T,d,m)= \log \big (S_n(T,d,m)\big )-\frac  {2\, d} m \, \sum_{k=1}^{m} \log(k/m) \qquad \\
&& \hspace{1cm} \mbox{with} \qquad S_n(T,d,m)=\frac 1 m \, \sum_{j=1}^{m}\big (\frac j m \big )^{2d}I_{T}(\lambda_{j}^{(n)})  \quad \mbox{and} \quad \lambda^{(n)}_k=2\pi \,\frac  k {n}. \label{S0}
\end{eqnarray}
The local Whittle objective function  $d \to W_n(T,d,m)$ can be minimized for estimating $d$ on the set $T$ providing the local Whittle estimator $\widehat d=\argmin_{d \in [0,0.5)} W_n(T,d,m)$ on $T$. 
\begin{rem}
Note that we use Fourier frequencies $\lambda^{(n)}_k=2\pi \,\frac  k {n}$ in the definition of $W_{n}(T,d,m)$, while its common definition (see for instance Robinson, 1995b) consider the Fourier frequencies  $\lambda_k=2\pi \,\frac  k {|T|}$. The explanation of this choice stems from the fact that in the definition of the following contrast $L_n(K,{\bf t},{\bf d},m)$ on the whole trajectory $(X_1,\ldots,X_n)$ we will sum the local contrasts $W_{n}(T_k,d_k,m)$. This choice is required for allowing some simplifications in the proofs. But, as we assume that $|T^*_i|=n_i^*\sim (\tau_{i}-\tau_{i-1}^*)n$, we asymptotically use almost the usual frequencies.
\end{rem}
\noindent Under Assumption A, we expect to estimate the distinct $d^*_i$ on the different stages $\{t_i^*+1,\ldots,t_{i+1}^*\}$ by using several local Whittle contrasts. In addition we will obtaining a $M$-estimator for estimating $d_i^*$ but also $t_i^*$ and even $K^*$. Hence, for $m\in \{1, \ldots,n\}$, we consider now a penalized local Whittle contrast defined by:
\begin{eqnarray} \label{defJ}
J_n(K,{\bf t},{\bf d},m)= \frac 1 n \, \sum_{k=1}^{K+1} n_k\, W_{n}(T_k,d_k,m)+K \, z_n,
\end{eqnarray}
where $K \in \N$ is a number of changes, ${\bf d}\in [0,0.5)^{K+1}$, ${\bf t}\in {\cal T}_K(0)$ and $(z_n)$ is a sequence of positive real numbers that will be specified in the sequel.\\
This contrast is therefore a sum of local Whittle objective functions on the $K+1$ different stages $T_k$, $k=1,\ldots, K+1$, and a penalty term that is a linear function of the number of changes (and therefore of the number of estimated parameters). Then, with $K_{\max} \in \N^*$ a chosen integer number, we define:
\begin{eqnarray}\label{Esti2}
(\widehat K_n , \widehat {\bf t},\, \widehat {\bf d})=\argmin_{K \in \{0,\ldots,K_{\max}\},~ {\bf d}\in [0,0.5)^{K+1},~ {\bf t}\in {\cal T}_K(0)} J_n(K,{\bf t},{\bf d},m),
\end{eqnarray}
with  $\widehat {\bf d}=(\widehat d_1,\cdots,\widehat d_{\widehat K_n +1})\quad \mbox{and}\quad \widehat {\bf t}=(\widehat t_1,\cdots,\widehat t_{\widehat K_n })$, and where for $a\geq 0$, 
\begin{equation}\label{Ta}
{\cal T}_K(a)=\Big \{ (t_1,\ldots,t_{K}) \in \{2,\ldots,n-1\}^{K},~t_{i+1}>t_i~\mbox{and}~|t_i-t_i^*|\geq a\quad \mbox{for all}~i=1,\ldots,K \Big \}. 
\end{equation}

\section{Asymptotic behaviors of the estimators} \label{asympto}
\subsection{Case of a known number of changes}
We study first the case of a known number $K^*$ of changes. In such a framework, let us define two particular cases of the minimization of the function $J_n$. First denote $\widetilde {\bf t}=(\widetilde t_1,\cdots,\widetilde t_{K^* })$ and $\widetilde {\bf d}=(\widetilde d_1,\cdots,\widetilde d_{K^* +1})$ obtained when the number of changes is known and 
$\widehat {\bf d^*}=(\widehat d^*_i)_{1\leq i \leq K^*+1}$ obtained when the number of changes and the change dates are known. They are defined by:
\begin{eqnarray}\label{Esti*}
(\widetilde {\bf t},\, \widetilde {\bf d})=\argmin_{{\bf d}\in [0,0.5)^{K^*+1},~ {\bf t}\in {\cal T}_{K^*}(0)} J_n(K^*,{\bf t},{\bf d},m)\quad \mbox{and}\quad \widehat {\bf d^*}=\argmin_{{\bf d}\in [0,0.5)^{K^*+1},} J_n(K^*,{\bf t^*},{\bf d},m).
\end{eqnarray} 
Then, we can prove:
\begin{theorem}\label{theo1}
For $X$ satisfying Assumption A, with $\widetilde {\boldsymbol{\tau}} = (\widetilde \tau_1,\ldots,\widetilde \tau_{K^*})$  where $
\widetilde \tau_i= \frac {\widetilde t_i } n$ for $i=1,\cdots,K^*$, and if $m=o(n)$,
$$
( \widetilde {\boldsymbol{ \tau}}, \widetilde {\bf d}) \limiteproban (\boldsymbol{\tau}^*, {\bf d}^* ).
$$
\end{theorem}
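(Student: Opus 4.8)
The plan is to exploit that the number of changes is fixed at $K^*$, so the penalty $K^*z_n$ in \eqref{defJ} is constant over the minimisation in \eqref{Esti*}; minimising $J_n(K^*,\mathbf t,\mathbf d,m)$ is thus equivalent to minimising the averaged local-Whittle contrast $U_n(\mathbf t,\mathbf d):=\frac1n\sum_{k=1}^{K^*+1}n_k\,W_n(T_k,d_k,m)$. I would first profile out $\mathbf d$: for a fixed admissible $\mathbf t$ each summand $W_n(T_k,d_k,m)$ depends only on $d_k$, so $\min_{\mathbf d}U_n(\mathbf t,\mathbf d)=\frac1n\sum_k n_k\min_{d_k}W_n(T_k,d_k,m)$, and the whole problem reduces to the behaviour of the one-block minimal contrast $V_n(T_k):=\min_{d_k\in[0,1/2)}W_n(T_k,d_k,m)$ as a function of how the block $T_k$ splits across the true regimes.

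The core is an asymptotic analysis of $W_n(T,d,m)$ through $S_n(T,d,m)$ in \eqref{S0} at the low Fourier frequencies $\lambda_j^{(n)}=2\pi j/n$. First I would treat a block contained in a single true regime $j$: using the spectral expansion \eqref{AssumptionS} with $d=d_j^*$, together with a law of large numbers for the normalised periodogram ordinates $(j/m)^{2d}I_T(\lambda_j^{(n)})$ (the usual local-Whittle arguments of Robinson 1995b and Kunsh 1987, adapted to the frequency normalisation explained in the Remark), I expect $W_n(T,d,m)$ to converge in probability, uniformly in $d$ on compact subsets of $[0,1/2)$, to a deterministic function $R_{d_j^*}(d)$ that is continuous and uniquely minimised at $d=d_j^*$, with minimal value $V(d_j^*):=R_{d_j^*}(d_j^*)$. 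Then comes the key and hardest step: for a block $T$ straddling two or more true regimes, with overlap counts $n_{kj}$ and proportions $\theta_{kj}=n_{kj}/n_k$, the periodogram $I_T$ is the squared modulus of a sum of pieces driven by distinct memory parameters but by the same innovations, so its low-frequency behaviour is a genuine mixture; controlling it requires the cross-covariance estimates between subtrajectories of the type computed in the proof of Lemma~\ref{lem}. I would show that the limiting one-block contrast of such a mixed block, minimised over $d$, is \emph{strictly} larger than the weighted average $\sum_j\theta_{kj}V(d_j^*)$ as soon as two distinct $d_j^*$ occur, the strictness being furnished by a Jensen-type inequality for the concave $\log$ appearing in \eqref{defW} and by the separation $\Delta_d>0$ of \eqref{Deltad}.

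With these ingredients I would assemble a limiting global contrast $\mathcal L(\boldsymbol\tau,\mathbf d)$ on rescaled configurations, prove (again uniformly over configurations whose blocks keep length of order $n$) that $U_n(\mathbf t,\mathbf d)$ converges in probability to $\mathcal L(\boldsymbol\tau,\mathbf d)$, and that $\mathcal L$ attains its minimum $\sum_j(\tau_j^*-\tau_{j-1}^*)V(d_j^*)$ exactly at $(\boldsymbol\tau^*,\mathbf d^*)$: any configuration with some $\tau_i\neq\tau_i^*$ forces at least one block to straddle a true change time, and the strict mixed-block inequality then makes $\mathcal L$ strictly larger. Consistency of the argmin follows from the standard identification argument: for every $\eta>0$, with probability tending to one all configurations with $\max_i|\tau_i-\tau_i^*|>\eta$ give a value of $U_n$ strictly above $U_n(\mathbf t^*,\mathbf d^*)$, whence $\widetilde{\boldsymbol\tau}$ converges in probability to $\boldsymbol\tau^*$; combining this with the single-regime convergence of the second step applied to the (asymptotically aligned) estimated blocks yields convergence of $\widetilde{\mathbf d}$ to $\mathbf d^*$, which is the claim.

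I would flag two obstacles. The genuinely hard one is the strict separation for mixed blocks: because the procedure is only semi-parametric (it sees the process through finitely many low frequencies and effectively identifies the memory exponent alone, not the full spectral density) and because the regimes are dependent through the common $(\varepsilon_t)$, one cannot simply invoke a parametric Kullback-type contrast as in Lavielle and Ludena (2000); the mixture low-frequency behaviour of a straddling block must be handled directly and its induced local-Whittle limit carefully compared to the pure one. A second, more routine difficulty is uniformity: the convergences above must hold uniformly over all admissible $\mathbf t\in\mathcal T_{K^*}(0)$, which in particular requires ruling out degenerate configurations in which some block length $n_k$ fails to grow proportionally to $n$ (where the periodogram-based law of large numbers degrades), by showing that such configurations cannot be minimisers.
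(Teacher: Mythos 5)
Your overall strategy --- drop the constant penalty, decompose a straddling block across the true regimes with cross terms of exactly the type controlled in Lemma \ref{lem}, use concavity of the logarithm as a lower bound, and identify the change configuration from the strict minimality of the pure-block limit criterion at $d=d_j^*$ --- is precisely the skeleton of the paper's proof. But one central step fails as you state it: $W_n(T,d,m)$ does \emph{not} converge to a deterministic $n$-free function $R_{d_j^*}(d)$, and consequently your limiting landscape $\mathcal L(\boldsymbol{\tau},\mathbf d)$ does not exist. By Lemma \ref{lem0}, $S_n(T,d,m)$ behaves like $(n/m)^{2d_j^*}\,c^*_{0,j}(2\pi)^{-2d_j^*}/(1+2d-2d_j^*)$, so $W_n(T,d,m)$ carries the diverging term $2d_j^*\log(n/m)$ (visible in the function $s$ of \eqref{s}); since $m=o(n)$ this blows up whenever $d_j^*>0$. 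Your claimed uniform convergence of the averaged contrast to $\mathcal L$ is therefore false; worse, for a mixed block the log of the sum is dominated by the largest exponent present, so the profiled one-block contrast diverges like $2\max_j d_j^*\log(n/m)$ rather than like the weighted average you compare it to. Your strict mixed-block inequality happens to point in the right direction (the gap even grows), but the finite limit objects on both sides of it are fictitious.

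The repair is exactly what the paper does: run the whole argument on the \emph{difference} $U_n({\bf t},{\bf d},m)=J_n(K^*,{\bf t},{\bf d},m)-J_n(K^*,{\bf t}^*,{\bf d}^*,m)$ of \eqref{U}. After the exact quadratic-form decomposition \eqref{Sbis} of $S_n(T_k,d_k,m)$ into pure pieces $S_n(T_{kj},d_k,m)$ plus cross terms $R_n$ (bounded uniformly in $d$ by Lemma \ref{lem}), and after your Jensen step, the coefficient of the divergent term $2d_j^*\log(n/m)$ matches on the two sides because $\sum_k n_{kj}=n_j^*$, and only the finite, $n$-free function $u(d_j^*,d)=-\log(1+2d-2d_j^*)+2d$ survives; strictness then comes from $u(d_j^*,\cdot)$ being uniquely minimized at $d_j^*$, not from Jensen itself. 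Two further places where the paper is sharper than your plan: (i) instead of your compactness-type identification argument, it invokes Lemma 2.3 of Lavielle (1999) to get the quantitative bound $\sum_{k,j}\frac{n_{kj}}n\big(u(d_j^*,d_k)-u(d_j^*,d_j^*)\big)\geq \frac C n\,\|{\bf t}-{\bf t}^*\|_\infty$, which yields consistency uniformly over ${\cal T}_{K^*}(n\delta)$ in one stroke; (ii) the degenerate configurations you worry about (blocks with $n_{kj}=o(n)$) are not ruled out as minimizers but absorbed: the two-scale normalization $\min(|T|,n/m)$ in Lemma \ref{lem0} together with \eqref{S2} shows such pieces contribute $O_P(1)$ errors weighted by $n_{kj}/n=O(1/m)$, which vanish inside the remainder $D(m,n)$.
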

\noindent This first theorem,  whose proof as well as all other proofs can be found in Section \ref{proofs}, can be improved for specifying the rate of convergence of the estimators:
\begin{theorem}\label{theo2}
For $X$ satisfying Assumption A, if $m =o\big ( n^{2\underline \beta^*/(1+2\underline \beta^*)} \big)$ where $\underline \beta^*=\min_{1\leq i \leq K^*+1} \beta_i^*$, then for any $\delta>0$,
\begin{equation} \label{conv1}
\lim_{\delta \to \infty} \quad\lim_{n\to \infty}\P \Big (\frac {\sqrt m} {n }  \, \big \| \widetilde {\bf t} - {\bf t^*}\big \| \geq \delta \Big ) = 0.
\end{equation}
\end{theorem}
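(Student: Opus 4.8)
The plan is to exploit the minimization property of $(\widetilde{\mathbf t},\widetilde{\mathbf d})$. Since the number of changes is fixed at $K^*$, the penalty term $K^*z_n$ in \eqref{defJ} is constant and plays no role, so $(\widetilde{\mathbf t},\widetilde{\mathbf d})$ minimizes $Q_n(\mathbf t,\mathbf d):=\frac1n\sum_{k=1}^{K^*+1}n_k\,W_n(T_k,d_k,m)$ over $\mathbf t\in{\cal T}_{K^*}(0)$ and $\mathbf d\in[0,0.5)^{K^*+1}$. Comparing the minimizer to the true segmentation $(\mathbf t^*,\widehat{\mathbf d^*})$ yields the fundamental inequality $Q_n(\widetilde{\mathbf t},\widetilde{\mathbf d})\le Q_n(\mathbf t^*,\widehat{\mathbf d^*})$. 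By Theorem \ref{theo1} I may work on an event of probability tending to one on which each $\widetilde t_i$ is closer to $t_i^*$ than to any other true break, so that every estimated segment overlaps the true segment $T_k^*$ together with at most two short contaminating pieces coming from the neighbouring regimes; this reduces the problem to controlling, change by change, the cost of a single misalignment $\ell_i=|\widetilde t_i-t_i^*|$.

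The core of the argument is a decomposition of $Q_n(\widetilde{\mathbf t},\widetilde{\mathbf d})-Q_n(\mathbf t^*,\widehat{\mathbf d^*})$ into a deterministic part and a stochastic remainder. For an estimated segment that coincides with $T_k^*$ up to $\ell$ contaminating points of an adjacent regime, I would analyse $\min_{d}W_n(\widetilde T_k,d,m)$ through the behaviour of $S_n(\widetilde T_k,d,m)$ at the low Fourier frequencies $\lambda_j^{(n)}=2\pi j/n$, $j=1,\dots,m$. Writing the periodogram of the concatenated (hence non-stationary) segment as the contribution of $T_k^*$, the contribution of the contaminating block, and a cross term, the expectation of the cross term is governed by the inter-regime covariances computed in Lemma \ref{lem} (see \eqref{tt'}); these are genuinely non-zero because all regimes are built from the same innovations, and handling them is exactly the additional difficulty absent from the parametric analysis of Lavielle and Ludena (2000). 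The outcome I expect is a deterministic lower bound $Q_n(\widetilde{\mathbf t},\widetilde{\mathbf d})-Q_n(\mathbf t^*,\widehat{\mathbf d^*})\ge \sum_{i=1}^{K^*}\kappa_i\,\frac{\ell_i}{n}-R_n$, where each $\kappa_i>0$ is bounded below in terms of the memory jump $|d^*_{i+1}-d^*_i|$, hence of $\Delta_d$; the positivity is precisely the identifiability already used to establish Theorem \ref{theo1}, now rendered quantitative.

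It then remains to bound the stochastic remainder $R_n$. The local Whittle contrast averages $m$ asymptotically independent, suitably normalised periodogram ordinates, so $W_n(\cdot,d,m)$ fluctuates around its mean at the scale $m^{-1/2}$, and the fluctuation of the excess attached to each misalignment can be shown, uniformly over candidate breaks in a shrinking neighbourhood of $\mathbf t^*$, to be $O_{\P}(m^{-1/2})$. The main obstacle is to make this uniform control rigorous: one needs moment and maximal inequalities for sums of products $X_sX_t$ over the contaminating blocks, at frequencies tending to zero, in the dependent piecewise-stationary model, in the spirit of Robinson (1995b) but without stationarity. Granting this, the fundamental inequality forces $\sum_i\kappa_i\,\ell_i/n\le R_n=O_{\P}(m^{-1/2})$, whence $\ell_i/n=O_{\P}(m^{-1/2})$ for each $i$, i.e. $\frac{\sqrt m}{n}\|\widetilde{\mathbf t}-\mathbf t^*\|=O_{\P}(1)$. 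A standard peeling device, partitioning the range of $\ell_i$ into dyadic blocks and using that on $\{\ell_i/n>\delta m^{-1/2}\}$ the deterministic term dominates the supremum of the remainder once $\delta$ is large, converts this into the tightness statement \eqref{conv1}, the outer limit $\delta\to\infty$ killing the residual probability.
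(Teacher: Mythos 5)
Your proposal follows essentially the same route as the paper's proof: the fundamental inequality $J_n(K^*,\widetilde{\bf t},\widetilde{\bf d},m)\le J_n(K^*,{\bf t}^*,\widehat{\bf d}^*,m)$, localization near ${\bf t}^*$ via Theorem \ref{theo1} (the paper's sequence $u_n$ plays the role of your high-probability event), decomposition of each contaminated segment into its true pieces plus inter-regime cross terms controlled by Lemmas \ref{lem0} and \ref{lem}, a strictly positive deterministic per-point gain of the form $2(d_1^*-d_2^*)-\log\big(1+2(d_1^*-d_2^*)\big)$ quantifying identifiability, and a stochastic remainder of order $m^{-1/2}$ under the bandwidth condition $m=o\big(n^{2\underline\beta^*/(1+2\underline\beta^*)}\big)$, which together yield the $n/\sqrt m$ rate. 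The only deviations are cosmetic --- you peel dyadically over the misalignments $\ell_i$ where the paper works directly on the annulus $\delta n/\sqrt m<|\widetilde t_1-t_1^*|\le u_n$ and reduces without loss of generality to $K^*=1$, and the uniform maximal control you defer (``granting this'') is precisely what the paper's Lemmas \ref{lem0} and \ref{lem} supply.
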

\noindent This result provides a bound of the ``best'' convergence rate of $ \widetilde {\bf t}$ which is minimized by $ n^{(1+\underline \beta^*)/(1+2\underline \beta^*)}$, {\it i.e.} the ``best'' convergence rate for  $ \widetilde {\boldsymbol{\tau}}$ is minimized by $ n^{-\underline \beta^*/(1+2\underline \beta^*)}$. 
\begin{rem} \label{rem1}
This rate of convergence could be compared to the result obtained in the parametric framework of Lavielle and Ludena (2000) where the respective convergence rates (in probability) of $ \widetilde {\bf t}$ and $ \widetilde {\boldsymbol{ \tau}}$  are $1$ and $n^{-1}$. This is the price to pay for going from the parametric to the semi-parametric framework. But also the price to pay to the definition of local Whittle estimator which does not allow some simplifications as in the proof of Theorem 3.4 of Lavielle and Ludena (2000, p. 860). Indeed the random term of their classical used definition of Whittle contrast is $\int_{\pi}^\pi I_T(\lambda)/f(\lambda)d\lambda$  while our random term is $ \log\big ( \frac 1 m \, \sum_{j=1}^m (j/m)^{2d}  I_T(\lambda_j^{(n)})\big ) $: the logarithm term does not make possible  their simplifications. 
\end{rem}
\noindent Another consequence of this result is that there is asymptotically a small lose on the convergence rates of the long memerory parameter local Whittle estimators $\widetilde d_i$ when the change dates are estimated instead of being known. More formally, using the results of Robinson (1995b) improved by Dalla {\it et al.} (2006), we know that under conditions of Theorem \ref{theo2}, $\widehat d_i^*$ satisfies 
\begin{equation} \label{conv2}
\sqrt { m} \big (\widehat d_i^* -d_i^* \big ) \limiteloin {\cal N} \big (0 \, , \,\frac 1 4  \big ).
\end{equation}
Unfortunately, the rate of convergence obtained for $ \widetilde t_i$ in Theorem \ref{theo2} does not allow to keep this limit theorem when $\widehat d_i^*$ is replaced by $\widetilde d_i$. We rather obtain:
\begin{theorem}\label{theo2bis}
Under the assumptions of Theorem \ref{theo2}, for any $i=1,\ldots,K^*+1$,
\begin{equation} \label{conv3}
\sqrt {  m }\, \big |\widetilde d_i -d_i^* \big |= O_P(1).
\end{equation}
\end{theorem}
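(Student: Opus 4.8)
The plan is to compare the estimator $\widetilde d_i$, which minimises $d\mapsto W_n(\widetilde T_i,d,m)$ over the \emph{estimated} block $\widetilde T_i=\{\widetilde t_{i-1}+1,\ldots,\widetilde t_i\}$, with the estimator $\widehat d_i^*$ of \eqref{conv2}, which minimises $d\mapsto W_n(T_i^*,d,m)$ over the \emph{true} block $T_i^*$. Since \eqref{conv2} already gives $\sqrt m\,|\widehat d_i^*-d_i^*|=O_P(1)$, it suffices to prove
\[
\sqrt m\,\big|\widetilde d_i-\widehat d_i^*\big|=O_P(1).
\]
By Theorem \ref{theo1} both $\widetilde d_i$ and $\widehat d_i^*$ lie, with probability tending to one, in a fixed neighbourhood $V$ of $d_i^*$; assuming $d_i^*\in(0,1/2)$ (the boundary case $d_i^*=0$ only requires the one-sided version of the same bound and is simpler), both are interior minimisers, so they solve the first-order conditions attached to $W_n(\widetilde T_i,\cdot,m)$ and $W_n(T_i^*,\cdot,m)$ respectively.

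The reduction then rests on two ingredients: a curvature lower bound and a control of the score perturbation. For the curvature, the local Whittle analysis underlying \eqref{conv2} (Robinson, 1995b, refined by Dalla \emph{et al.}, 2006) gives that $d\mapsto W_n(T_i^*,d,m)$ is, with probability tending to one, strongly convex on $V$, its second derivative being bounded below by a strictly positive constant $\kappa$. Writing $H_n(d):=W_n(\widetilde T_i,d,m)-W_n(T_i^*,d,m)$ and denoting its derivative by $H_n'=\frac{\partial}{\partial d}H_n$, the first-order conditions and the mean value theorem yield the deterministic inequality $\kappa\,|\widetilde d_i-\widehat d_i^*|\le \sup_{d\in V}|H_n'(d)|$. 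Thus everything reduces to showing $\sup_{d\in V}|H_n'(d)|=O_P(m^{-1/2})$. Note that the deterministic term $-\frac{2d}{m}\sum_{k=1}^m\log(k/m)$ in $W_n$ does not depend on the block and cancels in $H_n$, so $H_n'(d)$ is the difference of the two ratios $S_n'(T,d,m)/S_n(T,d,m)$ for $T=\widetilde T_i$ and $T=T_i^*$, where $S_n'(T,d,m)=\frac 2m\sum_{j=1}^m(j/m)^{2d}\log(j/m)\,I_T(\lambda_j^{(n)})$. Since the denominators $S_n(T,d,m)$ are, up to the common scale $(n/m)^{2d}$, bounded away from $0$ and $\infty$ on $V$, this difference is governed by the \emph{relative} perturbations of $S_n$ and of $S_n'$ produced by replacing $T_i^*$ with $\widetilde T_i$, i.e. by the weighted sums $\frac 1m\sum_{j=1}^m(j/m)^{2d}\big(I_{\widetilde T_i}(\lambda_j^{(n)})-I_{T_i^*}(\lambda_j^{(n)})\big)$ (and the same with the extra factor $\log(j/m)$).

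The main obstacle is precisely this last step, the control of the periodogram perturbation. By Theorem \ref{theo2}, $|\widetilde t_{i-1}-t_{i-1}^*|\vee|\widetilde t_i-t_i^*|=O_P(n/\sqrt m)$, so $\widetilde T_i$ and $T_i^*$ differ only by two boundary blocks of total length $b=O_P(n/\sqrt m)$, whence the relative block size is $b/|T_i^*|=O_P(m^{-1/2})$. Writing $D_T(\lambda)=\sum_{k\in T}X_k e^{-ik\lambda}$ and $R(\lambda)=D_{\widetilde T_i}(\lambda)-D_{T_i^*}(\lambda)$, a discrete Fourier transform over the boundary blocks, one has $|D_{\widetilde T_i}|^2-|D_{T_i^*}|^2=2\,\Re(\overline{D_{T_i^*}}\,R)+|R|^2$, and the change of normalisation $|\widetilde T_i|^{-1}$ versus $|T_i^*|^{-1}$ produces a further relative correction of order $b/|T_i^*|$. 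The ``diagonal'' contributions, namely $|R|^2$ and the normalisation correction, are of relative order $b/|T_i^*|=O_P(m^{-1/2})$ after averaging, and they are what ultimately produces the $m^{-1/2}$ rate; this is also why \eqref{conv3} cannot be sharpened to the $o_P(1)$-after-scaling that would be needed to transfer the central limit theorem \eqref{conv2} to $\widetilde d_i$.

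The delicate term is the cross term $\Re(\overline{D_{T_i^*}(\lambda_j^{(n)})}\,R(\lambda_j^{(n)}))$, which pointwise is only of relative order $\sqrt{b/|T_i^*|}=O_P(m^{-1/4})$; a mere pointwise bound, summed over the $m$ frequencies, would be far too crude. One must instead exploit the near-orthogonality of the Fourier ordinates at distinct frequencies $\lambda_j^{(n)}$, bounding the variance of $\frac 1m\sum_j(j/m)^{2d}\Re(\overline{D_{T_i^*}}\,R)$, together with a careful estimate of its mean, which involves the second-order covariance structure \eqref{covr} \emph{and}, because the boundary blocks may belong to an adjacent regime with a different memory parameter and the regimes are dependent through the common innovations, the cross-regime covariances computed as in Lemma \ref{lem}. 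Establishing that this cross term is $O_P(m^{-1/2})$ uniformly over $j=1,\ldots,m$ and $d\in V$ — the smoothness of the weights handling the uniformity in $d$ — is the crux; most of the required estimates are already available from, or run parallel to, the proof of Theorem \ref{theo2}. Combining the curvature lower bound with $\sup_{d\in V}|H_n'(d)|=O_P(m^{-1/2})$ gives $\sqrt m\,|\widetilde d_i-\widehat d_i^*|=O_P(1)$, and then \eqref{conv2} yields \eqref{conv3}.
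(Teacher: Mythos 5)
Your overall architecture is exactly the paper's: reduce \eqref{conv3} to $\sqrt m\,|\widetilde d_i-\widehat d_i^*|=O_P(1)$ via \eqref{conv2}, then show that replacing $T_i^*$ by $\widetilde T_i$ perturbs the local Whittle criterion by $O_P\big(|\widetilde t_i-t_i^*|/n\big)=O_P(m^{-1/2})$ uniformly in $d$, using the rate from Theorem \ref{theo2}. The step you single out as the unresolved crux --- the cross term $\Re\big(\overline{D_{T_i^*}}\,R\big)$ --- is, however, not open in the paper: through the quadratic-form representation \eqref{Sbis}, your cross term is precisely $2\,R_n\big(T_i^*\cap\widetilde T_i,\,\widetilde T_i\setminus T_i^*,\,d,m\big)$ with $R_n$ defined in \eqref{Rnm}, and since the boundary block $\widetilde T_i\setminus T_i^*$ lies in the \emph{adjacent} regime, Lemma \ref{lem} applies verbatim. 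The ``near-orthogonality plus variance'' analysis you call for is exactly the content of that lemma's proof: the mean bound \eqref{ER} and the variance bound \eqref{VarR} of this bilinear form are computed from the cross-regime covariances \eqref{tt'}, and the uniformity over $d\in[0,1/2)$ you worry about is built into the statement \eqref{lem2}. The paper's proof of the theorem then consists simply of the expansion $W_n(\{1,\ldots,\widetilde t_1\},d,m)=W_n(T_1^*,d,m)+D_{m,n,d}\,(\widetilde t_1-t_1^*)/n$ with $\sup_{d\in[0,1/2)}|D_{m,n,d}|=O_P(1)$, citing Lemmas \ref{lem0} and \ref{lem}, plus the observation that on the untouched side (e.g.\ $T_2\subset T_2^*$ when $\widetilde t_1>t_1^*$) one can invoke \eqref{TLCd2} directly with no boundary contamination --- a one-sided simplification your symmetric two-boundary treatment does not exploit. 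So the gap you flag is fillable by citation of an already-proved lemma rather than by new work; had you invoked Lemma \ref{lem} at that point, your proof would be complete.

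One point in your favour: your curvature step is more careful than the paper's. The paper concludes from ``$d\mapsto W_n(T,d,m)$ is a ${\cal C}^1$ function'' that a uniform $O_P(m^{-1/2})$ perturbation of the objective moves the argmin by $O_P(m^{-1/2})$; as your mean-value argument makes explicit, this inference requires a curvature lower bound and a bound on the \emph{score} perturbation $\sup_{d\in V}|H_n'(d)|$ (a sup-norm bound on the objective alone would only give the square root of the perturbation). Your formalization via first-order conditions supplies what the paper leaves implicit --- at the cost that you then need log-weighted analogues of Lemmas \ref{lem0} and \ref{lem} for $S_n'$, which you assert and which indeed follow by the same computations since the extra factor $\log(j/m)$ does not alter the bound \eqref{bn} in any essential way, but which neither you nor the paper writes out.
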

\subsection{Case of an unknown number of changes}
Here we consider the case where $K^*$ is unknown. For estimating $K^*$, the penalty term of penalized local Whittle contrast $J_n$ is now essantial. Indeed, we obtain:
\begin{theorem}\label{theo3}
Under the assumptions and notations of Theorem \ref{theo1}, if $K_{\max}\geq K^*$, with $m =o\big ( n^{2\underline \beta^*/(1+2\underline \beta^*)} \big)$ where $\underline \beta^*=\min_{1\leq i \leq K^*+1} \beta_i^*$ $z_n$ and $\max \big ( z_n \, , \,  \frac {1} {z_n \, \sqrt {  m }} \big ) \limiten 0$, using $(\widehat K, \,\widehat {\bf t},\, \widehat {\bf d})$ defined in (\ref{Esti2}), then
$$
(\widehat K, \widehat {\boldsymbol{\tau}}, \widehat {\bf d}) \limiteproban (K^*, \boldsymbol{\tau}^*, {\bf d}^* ).
$$
\end{theorem}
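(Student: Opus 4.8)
The plan is to reduce the statement to the known-number-of-changes case (Theorem \ref{theo1}) by proving that $\P(\widehat K_n = K^*) \to 1$. Indeed, on the event $\{\widehat K_n = K^*\}$ the penalty term $K^* z_n$ is constant in the minimization over $({\bf t},{\bf d})$, so the minimizer $(\widehat {\bf t}, \widehat {\bf d})$ coincides with the pair $(\widetilde {\bf t}, \widetilde {\bf d})$ defined in \eqref{Esti*}; Theorem \ref{theo1} then gives $(\widehat {\boldsymbol{\tau}}, \widehat {\bf d}) \to (\boldsymbol{\tau}^*, {\bf d}^*)$ in probability, and the whole statement follows. Writing $\widehat J_n(K) = \min_{{\bf d},{\bf t}} J_n(K,{\bf t},{\bf d},m)$ for the profiled contrast (penalty included), so that $\widehat K_n = \argmin_K \widehat J_n(K)$, it then suffices to rule out both underestimation and overestimation of $K^*$.

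First I would rule out underestimation, i.e. show $\P(\widehat K_n < K^*) \to 0$. The point is that any admissible segmentation with $K < K^*$ breaks necessarily has at least one estimated segment $T_k$ containing a non-negligible portion of two consecutive true regimes, whose memory parameters differ by at least $\min_i |d_{i+1}^* - d_i^*| > 0$ by \eqref{Deltad} and the standing assumption that consecutive $d_i^*$ differ. On such a straddling segment a single local Whittle parameter $d_k$ cannot fit a spectral density that behaves like $|\lambda|^{-2d_j^*}$ on one part and $|\lambda|^{-2d_{j+1}^*}$ on the other, so the minimized local Whittle contrast exceeds its homogeneous value by a quantity bounded below, in probability, by a strictly positive constant depending only on the jump sizes. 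Consequently $\widehat J_n(K) - \widehat J_n(K^*)$ is bounded below by this positive constant minus the penalty saving $(K^* - K) z_n$, and since $z_n \to 0$ the contrast gap dominates for $n$ large; taking the minimum over the finitely many $K < K^*$ gives the claim.

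Next I would rule out overestimation, $\P(\widehat K_n > K^*) \to 0$, which I expect to be the main obstacle. For $K > K^*$ the surplus breaks fall inside homogeneous regimes (after using the localization of the $K^*$ principal breaks furnished by Theorems \ref{theo1}--\ref{theo2}), so the deterministic part of the contrast is unchanged and any improvement is purely stochastic; the task is to bound the size of this improvement uniformly over all admissible placements of the spurious breaks. I would reduce to controlling the fluctuation of the profiled $W_n(T,d,m)$ when a homogeneous segment is split, and show it is of stochastic order $O_P(1/\sqrt m)$ per additional break, uniformly over placements. This is precisely where the local Whittle formulation is harder than the parametric Whittle contrast of Lavielle and Ludena (2000) (cf. Remark \ref{rem1}): the logarithm and the profiling over $d$ obstruct the explicit cancellations available there, so one must instead establish a uniform stochastic equicontinuity/maximal inequality for $W_n(T,d,m)$ over subsegments $T$ and over $d$ in a shrinking neighbourhood of the true value, relying on the periodogram moment bounds behind \eqref{conv2}. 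Granting such a bound, $\widehat J_n(K) - \widehat J_n(K^*) \geq (K - K^*) z_n - O_P((K - K^*)/\sqrt m)$, and the hypothesis $z_n \sqrt m \to \infty$ forces the penalty to dominate; minimizing over $K^* < K \leq K_{\max}$ then yields $\P(\widehat K_n > K^*) \to 0$.

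Combining the two steps gives $\P(\widehat K_n = K^*) \to 1$, and the reduction of the first paragraph completes the proof. The only genuinely delicate ingredient is the uniform control of the contrast decrease under spurious splits in the overestimation step; here the bandwidth restriction $m = o(n^{2\underline\beta^*/(1+2\underline\beta^*)})$ is exactly what guarantees that the bias from the second-order term in \eqref{AssumptionS} stays below the $1/\sqrt m$ fluctuation level, so the deterministic part of the contrast creates no spurious gains and only the stochastic $O_P(1/\sqrt m)$ term, dominated by $z_n$, remains to be handled.
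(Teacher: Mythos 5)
Your proposal follows the paper's own proof essentially step for step: the paper likewise reduces to the event $\{\widehat K = K^*\}$ (this is exactly how Corollary \ref{cor1} is derived), rules out $K<K^*$ by bounding the contrast gap below by a positive constant --- your ``straddling segment'' heuristic made quantitative via Lavielle's Lemma 2.3 extended with fictive change times --- against the vanishing penalty saving $(K^*-K)z_n$, and rules out $K^*<K\leq K_{\max}$ by first localizing the $K^*$ principal breaks among $\widehat t_1,\ldots,\widehat t_K$ through Theorem \ref{theo2} and then showing the spurious improvement is $O_P(1/\sqrt m)$, dominated by $(K-K^*)z_n$ since $z_n\sqrt m \to \infty$. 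The uniform control you defer (``granting such a bound'') is precisely what the paper supplies through Lemma \ref{lem0}, Lemma \ref{lem} and the expansions from the proof of Theorem \ref{theo2}, including the role you correctly assign to the bandwidth condition $m=o\big(n^{2\underline\beta^*/(1+2\underline\beta^*)}\big)$ in keeping the bias below the $1/\sqrt m$ fluctuation level, so your sketch matches the published argument in structure and in all its key mechanisms.
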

\noindent Note that the conditions we obtained on $m$ and $z_n$  imply that $n^{-\underline \beta^*/(1+2\underline \beta^*)} =o(z_n)$, depending on $\underline \beta^*$ that is generally unknown. However, the choice $z_n=n^{-1/2}$ is a possible choice solving this problem. The provided proof does not allow to establish the consistency of a typical BIC criterion, which should be $z_n=2 \log n /n$ (and the forthcoming numerical results obtained using this BIC penalty are not surprisingly a disaster). 
\begin{cor}\label{cor1}
Under the conditions of Theorem \ref{theo3}, the bounds \eqref{conv1} and \eqref{conv3} hold, {\it i.e.},
$$
\lim_{\delta \to \infty} \quad\lim_{n\to \infty}\P \Big (\frac {\sqrt m} {n }  \, \big \| \widehat {\bf t} - {\bf t^*}\big \| \geq \delta \Big ) = 0\quad\mbox{and}\quad \sqrt {  m }\, \big \|\widehat {\bf d} -{\bf d^*} \big \|= O_P(1)
$$
\end{cor}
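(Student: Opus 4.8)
The plan is to reduce both bounds to the already-established rates for the known-number-of-changes estimators $(\widetilde {\bf t}, \widetilde {\bf d})$ by conditioning on the event that the estimated number of breaks is correct. The crucial observation is that Theorem \ref{theo3} yields in particular $\P(\widehat K = K^*) \to 1$ as $n \to \infty$, and that on the event $\{\widehat K = K^*\}$ the penalty term $K^* z_n$ appearing in $J_n(K^*, {\bf t}, {\bf d}, m)$ is a fixed additive constant. Minimizing $J_n(K^*, \cdot, \cdot, m)$ over ${\bf t} \in {\cal T}_{K^*}(0)$ and ${\bf d} \in [0, 0.5)^{K^*+1}$ is therefore identical to minimizing its contrast part alone, which is exactly the minimization defining $(\widetilde {\bf t}, \widetilde {\bf d})$ in \eqref{Esti*}. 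Hence on $\{\widehat K = K^*\}$ one has the pointwise identity $(\widehat {\bf t}, \widehat {\bf d}) = (\widetilde {\bf t}, \widetilde {\bf d})$.

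For the first bound I would write, for every $\delta > 0$,
\begin{equation*}
\P \Big (\tfrac {\sqrt m} {n } \big \| \widehat {\bf t} - {\bf t^*}\big \| \geq \delta \Big ) \leq \P \Big (\tfrac {\sqrt m} {n } \big \| \widetilde {\bf t} - {\bf t^*}\big \| \geq \delta \Big ) + \P\big (\widehat K \neq K^* \big),
\end{equation*}
where on the event $\{\widehat K = K^*\}$ I have replaced $\widehat {\bf t}$ by $\widetilde {\bf t}$ using the identity above, and the discarded contribution is absorbed into $\P(\widehat K \neq K^*)$. Letting $n \to \infty$ removes the second term by Theorem \ref{theo3}, and then letting $\delta \to \infty$ and invoking Theorem \ref{theo2} gives the first statement of the corollary.

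For the second bound the same splitting shows $\sqrt m \, \|\widehat {\bf d} - {\bf d^*}\| = O_P(1)$. Fix $\varepsilon > 0$; since $K^*$ is finite, Theorem \ref{theo2bis} applied coordinate by coordinate gives $\sqrt m \, \|\widetilde {\bf d} - {\bf d^*}\| = O_P(1)$, so there is $M > 0$ with $\limsup_n \P(\sqrt m \, \|\widetilde {\bf d} - {\bf d^*}\| > M) < \varepsilon/2$. Then
\begin{equation*}
\P\big(\sqrt m \, \|\widehat {\bf d} - {\bf d^*}\| > M\big) \leq \P\big(\sqrt m \, \|\widetilde {\bf d} - {\bf d^*}\| > M\big) + \P\big(\widehat K \neq K^*\big),
\end{equation*}
and since $\P(\widehat K \neq K^*) \to 0$ one obtains $\limsup_n \P(\sqrt m \, \|\widehat {\bf d} - {\bf d^*}\| > M) < \varepsilon$, which is precisely the $O_P(1)$ claim.

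There is essentially no analytic obstacle here, since the substantive work has already been carried out in Theorems \ref{theo2}, \ref{theo2bis} and \ref{theo3}; the corollary is a soft probabilistic transfer. The only point deserving genuine care is the identity $(\widehat {\bf t}, \widehat {\bf d}) = (\widetilde {\bf t}, \widetilde {\bf d})$ on $\{\widehat K = K^*\}$, which relies on the penalty being constant once $K$ is fixed at $K^*$, so that the unknown-$K$ minimizer, whenever it selects the correct model dimension, necessarily agrees with the known-$K^*$ minimizer (adopting the same tie-breaking convention should the argmin fail to be unique).
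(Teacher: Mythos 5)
Your proposal is correct and is precisely the argument the paper intends: its one-line proof also conditions on the event $\{\widehat K = K^*\}$, whose probability tends to one by Theorem \ref{theo3}, and transfers the rates of Theorems \ref{theo2} and \ref{theo2bis} exactly as you do. Your write-up merely makes explicit the identity $(\widehat {\bf t},\widehat {\bf d})=(\widetilde {\bf t},\widetilde {\bf d})$ on that event and the resulting probability splitting, which the paper leaves implicit.
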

\noindent Then the convergence rates of the estimators obtained in the case where the number of changes is unknown is the same as if the number of changes is known. 
\section{Numerical experiments}\label{simu}
In the sequel we first describe the concrete procedure for applying the new multiple changes estimator, then we present the numerical results of Monte-Carlo experiments.
\subsection{Concrete procedure of estimation}
Several details require to be specified to concretely apply the multiple changes estimator. Indeed, we have done:
\begin{enumerate}
\item The choice of meta-parameters: 1/ as we mainly studied the cases of FARIMA processes for which $\beta=2$, we chose $m=n^{0.65}$; 2/ the number $K_{\max}\geq K^*$ is crucial for the heuristic plot procedure (see below)  and was chosen such as $K_{\max}=2([\log(n)]-1)$, implying $K_{\max}=10,\,12$ and $14$ respectively for $n=500,\, 2000$ and $5000$.
\item As the choice of the sequence $(z_n)$ of the penalty term is not exactly specified but just has to satisfy $\max \big ( z_n \, , \,  \frac {1} {z_n \, \sqrt {  m }} \big ) \limiten 0$. After many numerical simulations, we chose $z_n=2 \,  \sqrt n$ that offers best results among our choices. 
\item The dynamic programming procedure is implemented for allowing a significant decrease of the time consuming. Such procedure is very common in the offline multiple change context and has been described with details in Kay (1998).
\item For improving the procedure of selection of the changes number $K^*$ for not too large samples, we implemented a data-driven procedure so-called ``the heuristic slop procedure''. This procedure was introduced by Arlot and Massart (2009) in the framework of least squares estimation with fixed design, but that can be extended in many statistical fields (see Baudry {\it et al.}, 2012). Applications in the multiple changes detection problem was already successfully done in Baudry {\it et al.} (2012) in an i.i.d. context and also for dependent time series in Bardet {\it et al.} (2012). In a general framework, it consists in computing $-2 \, \log (\widehat  {LIK}(K))$ where $\widehat  {LIK}(K)$ is the maximized likelihood for any $K \in \{0,1,\ldots,K_{\max} \}$. Here $-2 \, \log (\widehat  {LIK}(K))$ is replaced by $\frac 1 n \, \sum_{k=1}^{K+1} n_k\, W_{n}(\widetilde T_k,\widetilde d_k,m)$. Then for $K>K^*$, the decreasing of this contrast with respect to $K$ is almost linear with a slope $s$ (see Figure \ref{Fig1} where the linearity can be observed when $K>K^*=4$), which can be estimated for instance by a least-squares estimator $\widehat s$. Then $\widehat K_H$ is obtained by minimizing the penalized contrast $J_n$ using $\widehat z_n=2 \, \widehat s$, {\it i.e.}
$$
\widehat K_H=\argmin _{0 \leq K \leq K_{\max}} \Big \{ \frac 1 n \, (\widetilde{t}_{k+1}-\widetilde{t}_{k})\,\sum_{k=1}^{K+1} (\, W_{n}(\{\widetilde{t}_k+1,\ldots,\widetilde{t}_{k+1} \},\widetilde d_k,m)+2 \, \widehat s \, K \Big \}. 
$$   
By construction, the procedure is sensitive to the choice of $K_{\max}$ since a least squares regression is realized for the ``largest'' values of $K$ and we preferred to chose the largest reasonable value of $K_{\max}$.
\begin{figure}[ht]
\begin{center}
\includegraphics[width=15 cm,height=10 cm]{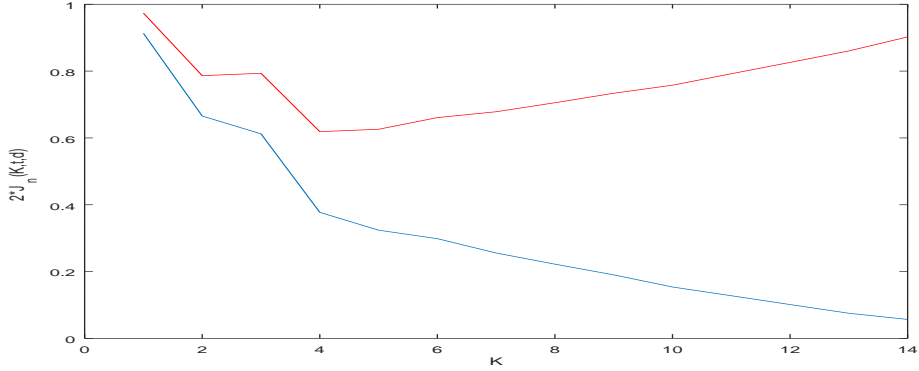}
\caption{\label{Fig1}\textit{For $n=5000$, $K^*=4$ and a FARIMA$(0,d,0)$ process, the graph of $2 \times J_n(K,\,\widehat {\bf t},\, \widehat {\bf d},m)$ (in blue), and the one of $2 \times J_n(K,\,\widehat {\bf t},\, \widehat {\bf d},m)+2 \times \widehat s \times K$ (in red).}} 
\end{center}
\end{figure}
\end{enumerate}
A software was written with {\tt Octave} software (also executable with {\tt Matlab} software) and is available on {\tt http://samm.univ-paris1.fr/IMG/zip/detectchange.zip}.
\subsection{Monte-Carlo experiments in case of known number of changes}
In the sequel we first exhibit the consistency of the multiple breaks estimator when the number of changes is known. Monte-Carlo experiments are realized in the following framework:
\begin{enumerate}
\item Three kinds of processes are considered: a FARIMA$(0,d,0)$ process, a FARIMA$(1,d,1)$ process with a AR coefficient $\psi=-0.7$ and a MA coefficient $\theta=0.3$ (this refers to the familiar representation $(1-\psi\, B)X=(1-B)^{-d} (1+\theta\, B) \varepsilon$ where $B$ is the backward operator) and a linear stationary process called $X^{(d,1)}$ belonging to Class $L(d,1,1)$, since we chose a sequence $(a_k)_{k\in \N}$ satisfying
\begin{eqnarray*}
a_k=(k+1)^{d-1}+ (k+1)^{d-2} \quad \mbox{for all $k \in \N$}.
\end{eqnarray*}
Note that both the FARIMA processes belongs to Class $L(d,2,c_0)$. 
\item For $n=500,\, 2000$ and $5000$, two cases are considered:
\begin{itemize}
\item Zero change, $K^*=0$ and $d_1^*=0.4$, then $d_1^*=0.1$, for obtaining a benchmark of the accuracy of local Whittle estimator of the  long-range dependence parameter;
\item One change, $K^*=1$ and $(d_1^*,d_2^*)=(0.4,0.1)$ and $\tau_1^*=0.5$;
\item Three changes, $K^*=3$ and $(d_1^*,d_2^*,d_3^*,d_4^*)=(0.4,0.1,0.4,0.1)$ and $(\tau_1^*,\tau_2^*,\tau_3^*)=(0.25,0.5,0.75)$. 
\end{itemize}
\item Each case is independently replicated $500$ times and the RMSE, Root-Mean-Square Error, is computed for each estimator of the parameter.  
\end{enumerate}
The results of Monte-Carlo experiments are detailed in Table \ref{Table1}.
\begin{table}
\caption{ \label{Table1} RMSE of the estimators from $500$ independent replications of processes, when the number $K^*$ of changes is known.}
\begin{center}
\begin{tabular}{||l||c||c|c|c||c|c|c||c|c|c||}
& & \multicolumn{3}{c||}{FARIMA$(0,d,0)$} & \multicolumn{3}{c||}{FARIMA$(1,d,1)$ } & \multicolumn{3}{c||}{$X^{(d,1)}$}  \\ \hline
& $n$ & 500 & 2000 & 5000 & 500 & 2000 & 5000 & 500 & 2000 & 5000 \\
\hline\hline 
$K^*=0$ & $\widetilde d_1$~$(d_1=0.4$) & 0.070&0.047 & 0.034 & 0.098  & 0.090 & 0.066 & 0.077  & 0.048 & 0.035   \\   
& $\widetilde d_1$~$(d_1=0.1$)  & 0.075& 0.046 & 0.033 & 0.224 & 0.119 & 0.073 & 0.199 & 0.165 & 0.146 \\  \hline  \hline 
$K^*=1$ & $\widetilde \tau_1$ & 0.202&0.025 & 0.011 & 0.193  & 0.038 & 0.012 & 0.216  & 0.143 & 0.091   \\  
 & $\widetilde  d_1$  & 0.178 & 0.055 & 0.043 & 0.099 & 0.096 & 0.082& 0.189 & 0.130 & 0.092  \\  
& $\widetilde d_2$  & 0.181& 0.063 & 0.043 & 0.317 & 0.188 & 0.128 & 0.258 & 0.162 & 0.130 \\  \hline  \hline 
$K^*=3$ & $\widetilde \tau_1$ & 0.264 & 0.177 &  0.020 & 0.257 & 0.162 & 0.016 &0.197 & 0.175& 0.095 \\  
&$\widetilde \tau_2$ &  0.231 & 0.144 &  0.035 & 0.231 &  0.134 & 0.011 &0.223 & 0.208& 0.141\\  
&$\widetilde \tau_3$ &  0.252 & 0.099 &  0.017 & 0.225  & 0.145 & 0.013 &0.236 & 0.160 & 0.120\\  
 & $\widetilde d_1$  & 0.182 & 0.075 & 0.047 & 0.117 & 0.095 & 0.087 & 0.283 & 0.200 &0.103\\  
& $\widetilde d_2$  & 0.327 & 0.114 & 0.066 & 0.357 & 0.282 & 0.167 & 0.347 & 0.276 &0.167\\ 
 & $\widetilde d_3$  & 0.414 & 0.206 & 0.055 & 0.165 & 0.097 & 0.088 & 0.470 & 0.257 &0.105\\  
& $\widetilde d_4$  & 0.215 & 0.099& 0.061 & 0.365 & 0.293 & 0.196 & 0.308 & 0.206 &0.149\\  
\hline
\end{tabular}
\end{center}
\end{table} 

\subsection{Monte-Carlo experiments in case of unknown number of changes}
In this subsection, we consider the result of the model selection using the penalized contrast for estimating the number of changes $K^*$. We reply exactly the same framework that in the previous subsection and notify the frequencies of the event '$\widehat K=K^*$', for:
\begin{itemize}
\item $\widehat K=\widehat K_n$ obtained directly by minimizing $J_n$ with $z_n=2/\sqrt n$;
\item $\widehat K=\widehat K_{BIC}$ obtained directly by minimizing $J_n$ with $z_n=2\log n/n$, following the usual BIC procedure;
\item $\widehat K=\widehat K_{H}$ obtained from the ``Heuristic Slope'' procedure described previously. 
\end{itemize}
We obtained the results detailed in Table \ref{Table2}:
\begin{table}
\caption{ \label{Table2} Frequencies of recognition of the true number of changes with several criteria from $500$ independent replications of processes.}
\begin{center}
\begin{tabular}{||l||c||c|c|c||c|c|c||c|c|c||}
& & \multicolumn{3}{c||}{FARIMA$(0,d,0)$} & \multicolumn{3}{c||}{FARIMA$(1,d,1)$ } & \multicolumn{3}{c||}{$X^{(d,1)}$}  \\ \hline
& $n$ & 500 & 2000 & 5000 & 500 & 2000 & 5000 & 500 & 2000 & 5000 \\
\hline\hline 
$K^*=1$ & $\widehat K_n$ &0.11 &0.21 & 0.51 & 0.21& 0.45 & 0.67 & 0.05 & 0.05& 0.01 \\ 
 & $\widehat K_{BIC}$  &0  & 0 & 0 & 0 & 0 & 0 &0 & 0 &0 \\  
& $\widehat K_H$  & 0.35 & 0.91 & 0.92 & 0.49 & 0.77 & 0.81 & 0.25  & 0.47 &0.57 \\  \hline  \hline 
$K^*=3$ & $\widehat K_n$ & 0.13& 0.12 & 0.32 &  0.12 & 0.21  &0.52 & 0.16 & 0.07 & 0.02\\  
 & $\widehat K_{BIC}$  & 0 & 0 & 0 & 0 & 0 & 0 & 0 & 0 & 0 \\  
& $\widehat K_H$  & 0.02 & 0.16 & 0.85 & 0.03 & 0.21 & 0.80 &  0.07& 0.16& 0.32\\  
\hline
\end{tabular}
\end{center}
\end{table} 
\subsection{Conclusions of Monte-Carlo experiments}
From Tables \ref{Table1} and \ref{Table2}, we may conclude that:
\begin{enumerate}
\item Even using the local Whittle estimator which is probably the most accurate in this framework, it is easy to verify that if the behaviour of the spectral density in $0$ is not smooth, then even with a trajectory of size 5000, we keep a quadratic risk greater than $0.1$ (see the case $K^*=0$ for a FARIMA$(1,d,1)$ or for the $X^{(d,1)}$ process). We do not have to forget that the parameter $d$ is relative to the long memory behaviour of the process, in a semi-parameteric setting. 
\item If the number of changes is known, the estimators of $\tau_i$ and $d_i$ are consistent but their rates of convergence are slightly impacted by the number of changes: as we could imagine, the largest $K^*$ the largest the RMSE of the estimators. But finally, the case $n=5000$ provides extremely convincing results in FARIMA framework concerning the estimation of $\tau_i$, while the convergence rates for the process $X^{(d,1)}$ are slow (since the asymptotic behavior of the spectral density around $0$ is clearly rougher than in FARIMA framework). 
\item The estimators of number of changes  $\widehat K_n$ and $\widehat K_H$ have a  satisfying behavior, meaning that they seem to converge to $K^*$ when the sample length increases in the FARIMA framework. Once again, the consistencies are slightly better for small $K^*$ than for large $K^*$. The results obtained with the ``Slope Heuristic'' procedure estimator $\widehat K_H$ are almost the most accurate and provides very convincing results for $n=5000$. Note also that the usual BIC penalty is not at all consistent, which can be explained by the use of local Whittle contrast that is not an approximation of the Gaussian likelihood as the usual Whittle contrast is. In case of  process $X^{(d,1)}$, only  $\widehat K_H$ seems to be consistent while  $\widehat K_n$  is not able to detect the number of changes: this is due to the fact that the bandwidth parameter $m$ can not be chosen as $n^{0.65}$ for obtaining consistent estimators of long memory parameters.
\end{enumerate}
Finally we could underline that our detector based on a local Whittle contrast added to a ``slope heuristic'' data-driven penalization provides convincing results when $n=5000$ and not too bad when $n=2000$ (the case $n=500$ gives not significant estimation). 
\section{Proofs} \label{proofs}
\noindent Following the expansion \eqref{Spectral}, we denote in the sequel for $i=1,\ldots,K^*+1$,
\begin{equation}\label{c'}
c^*_{0,i}=\frac {c_i^{*2}}\pi \, B(1-2d_i^*,d_i^*) \, \Gamma(2d_i^*)\, \sin \Big (\frac \pi 2 -\pi d_i^* \Big )\, \big |\lambda \big |^{-2d_i^*}.
\end{equation} 
We first provide the statements and the proofs of two useful lemmas:
\begin{lemma}\label{lem0}
Under the assumptions of Theorem \ref{theo1} and with $S_n(T,d,m)$ defined in \eqref{S0},  for any $i\in \{1,\ldots,K^*+1\}$ and $T \subset T_i^*$, 
\begin{multline} \label{lem1}
 \sup _{d \in [0,1/2)}\min\Big ( T\, ,\,  \frac n m \Big )^{-2d_i^*}  \Big |S_n(T,d,m)-\min\Big ( T\, ,\,  \frac n m \Big )^{2d_i^*}  \frac {c^*_{0,i} (2\pi )^{-2d_i^*} }{1+2d-2d_i^*} \Big |  \\
= O_P \Big (  \min \Big (1 \, , \, \frac {n}{m|T|} \Big) ^{1/2} + \Big ( \frac m n \Big ) ^{\beta_i^*} +m^{-2d_i^*} \Big ).
\end{multline}
\end{lemma}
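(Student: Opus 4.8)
The plan is to write $S_n(T,d,m)=\E\big[S_n(T,d,m)\big]+\big(S_n(T,d,m)-\E[S_n(T,d,m)]\big)$ and to match the deterministic part against the stated main term (the bias analysis) while bounding the centred part in probability (the fluctuation analysis). First I would remove the change-point structure: since $T\subset T_i^*$, the variables $(X_k)_{k\in T}$ form a block of consecutive observations of the \emph{stationary} regime-$i$ linear process, whose autocovariance and spectral density obey \eqref{covr} and \eqref{Spectral}--\eqref{c'} with $f_i(\lambda)=c^*_{0,i}|\lambda|^{-2d_i^*}+O(|\lambda|^{-2d_i^*+\beta_i^*})$; moreover $I_T(\lambda)$ is invariant under translation of $T$ (the phase cancels in the modulus), so its law depends on $T$ only through $N:=|T|$ and I may take $T=\{1,\dots,N\}$. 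Throughout, the governing quantity is the comparison between the window length $N$ and the inverse bandwidth $n/m$: this is exactly what the two minima $\min(N,n/m)$ and $\min(1,n/(mN))$ record.

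For the deterministic part I would start from the Fej\'er representation $\E[I_N(\lambda)]=\int_{-\pi}^{\pi}f_i(\mu)\,\Phi_N(\lambda-\mu)\,d\mu$. The key structural fact is a crossover at $\lambda\asymp 2\pi/N$, equivalently at the index $j\asymp n/N$: for resolvable frequencies ($j\gtrsim n/N$) the kernel is concentrated and $\E[I_N(\lambda_j^{(n)})]\sim f_i(\lambda_j^{(n)})\asymp (n/j)^{2d_i^*}$, whereas for $j\lesssim n/N$ the periodogram saturates at the level $\asymp N^{2d_i^*}$ (obtained by rescaling $\mu=x/N$, or directly from $\E[I_N(0)]=\Var(\sum_{k=1}^N X_k)/(2\pi N)\asymp N^{2d_i^*}$). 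Inserting this into $\frac1m\sum_{j=1}^m(j/m)^{2d}\E[I_N(\lambda_j^{(n)})]$ and replacing sums by integrals, one finds: when $N\ge n/m$ the crossover index $n/N$ lies in $\{1,\dots,m\}$, the resolvable range dominates, and the Riemann approximation of $\frac1m\sum_j (j/m)^{2d}c^*_{0,i}(2\pi j/n)^{-2d_i^*}$ yields precisely $(n/m)^{2d_i^*}\,c^*_{0,i}(2\pi)^{-2d_i^*}/(1+2d-2d_i^*)$; when $N<n/m$ every frequency is sub-resolution, $\E[S_n]\asymp N^{2d_i^*}$, which is of the same order as the stated main term $N^{2d_i^*}c^*_{0,i}(2\pi)^{-2d_i^*}/(1+2d-2d_i^*)$, so the difference is automatically of size $\min(N,n/m)^{2d_i^*}$ and absorbed into the $O_P$-error. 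The second-order part of \eqref{Spectral} contributes, after the same summation, a relative error $O((m/n)^{\beta_i^*})$; the more delicate bookkeeping of the low-frequency bias near the spectral pole (the partial cancellation between the saturated sub-resolution mass and the $f_i$-integral over $j\lesssim n/N$) produces the remaining relative term $O(m^{-2d_i^*})$.

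The harder half is the fluctuation. Writing $\Var(S_n)=\frac1{m^2}\sum_{j,k}(j/m)^{2d}(k/m)^{2d}\Cov\big(I_N(\lambda_j^{(n)}),I_N(\lambda_k^{(n)})\big)$, I would expand the discrete Fourier transform of the linear process as $w_N(\lambda)=\alpha_i(\lambda)w_\varepsilon(\lambda)+R_N(\lambda)$, where $\alpha_i(\lambda)=\sum_j a_j^{(i)}e^{ij\lambda}$ satisfies $2\pi f_i=|\alpha_i|^2$, $w_\varepsilon$ is the transform of the innovations, and $R_N$ is controlled as in Robinson (1995b) and Dalla {\it et al.} (2006) using the regularity $\partial_\lambda\alpha_i=O(|\lambda^{-1}\alpha_i|)$ built into the class $L(d_i^*,\beta_i^*,c_i^*)$. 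The crucial and non-standard point is that $\lambda_j^{(n)}=2\pi j/n$ are the natural frequencies of a length-$n$ window, not of the length-$N$ block on which the transform is computed, so the $w_\varepsilon(\lambda_j^{(n)})$ are \emph{not} orthogonal: their cross-covariances are governed by a Dirichlet kernel of width $\asymp n/N$ in the index. Hence $\Cov(I_N(\lambda_j^{(n)}),I_N(\lambda_k^{(n)}))$ is comparable to the product of the (possibly saturated) expected periodograms when $|j-k|\lesssim n/N$ and negligible otherwise; counting the $\asymp m\cdot\min(m,n/N)$ non-negligible pairs and inserting the expected-periodogram orders from the previous step gives $\Var(S_n)\lesssim \min(N,n/m)^{4d_i^*}\min\!\big(1,\tfrac{n}{mN}\big)$, i.e. a relative standard deviation $\min(1,n/(mN))^{1/2}$. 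The non-Gaussian fourth-cumulant contribution to this covariance is of no larger order. This step---the loss of orthogonality from the frequency/window mismatch, which is precisely where the $\min$-structure of both the normalisation and the error originates---is the main obstacle.

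Finally I would upgrade the pointwise-in-$d$ estimates to the supremum over $d\in[0,1/2)$. Since $d$ enters only through the weights $(j/m)^{2d}$, which are monotone in $d$ and bounded by $1$, and the main-term coefficient $1/(1+2d-2d_i^*)$ stays bounded (as $d_i^*<1/2$ keeps $1-2d_i^*>0$), the bias bound is uniform; for the fluctuation, the $d$-derivative is again a sum of the same type, so a short chaining/monotonicity argument over a fine grid of $d$, combined with Markov's inequality on each $\E$- and $\Var$-bound, converts them into the $\sup_d$ statement in probability. Collecting the normalised bias $O((m/n)^{\beta_i^*}+m^{-2d_i^*})$ and the normalised fluctuation $O_P(\min(1,n/(mN))^{1/2})$, and recalling $N=|T|$, yields exactly \eqref{lem1}.
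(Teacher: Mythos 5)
Your proposal is correct in outline and reproduces the right error budget, but it packages the argument differently from the paper, which is worth comparing. You decompose $S_n(T,d,m)$ into mean plus centred fluctuation: a Fej\'er-kernel analysis of $\E\, I_T(\lambda_j^{(n)})$ with a resolvability crossover at $j\asymp n/|T|$, then a fourth-order covariance computation for the variance. The paper instead normalizes by the pole, setting $\eta_j^*=I_T(\lambda_j^{(n)})/(c^*_{0,i}(\lambda_j^{(n)})^{-2d_i^*})$, and proves a single $L^1$ bound \eqref{vareta} on $\frac 1m\sum_{j\leq m}(\eta_j^*-1)$ via the three-term decomposition of Proposition 5 in Dalla {\it et al.} (2006): $R_{|T|}(m)$ yields the $(m/n)^{\beta_i^*}$ term, $p_{|T|,2}(m)$ is handled by Robinson's (1995b) relation (3.17) adapted to the rescaling $j\leftrightarrow j|T|/n$, and $p_{|T|,1}(m)$ by a variance computation with the Dirichlet coefficients $d_s$ satisfying $|d_s|\leq \min\big(2m/|T|,\,2n/(\pi s|T|)\big)$ --- which is precisely the frequency/window mismatch you identify as the crux, and is indeed the source of the $(n/(m|T|))^{1/2}$ term; your sub-resolution step ($\E\, I_T(\lambda_j^{(n)})\leq C|T|^{2d_i^*}$ plus Markov, the tolerance being $O_P(1)$ once $|T|\leq n/m$) is the paper's \eqref{S2} essentially verbatim, and your monotonicity argument for uniformity in $d$ matches the paper's appeal to Lemma 2 of Dalla {\it et al.} Two points of divergence deserve mention. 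First, you misattribute the $m^{-2d_i^*}$ term: in the paper it is purely deterministic, namely $\sup_{d\in[0,1/2)}m^{2d-2d_i^*-1}$, arising in \eqref{mieux} when $\frac 1m\sum_{j\leq m}(j/m)^{2d-2d_i^*}$ is replaced by $(1+2d-2d_i^*)^{-1}$ --- a Riemann-sum correction that your sketch declares exact ("yields precisely") before recovering the same order from low-frequency saturation bookkeeping; in your scheme both effects must be tracked. Second, your claim that $\Cov\big(I_N(\lambda_j^{(n)}),I_N(\lambda_k^{(n)})\big)$ is negligible for $|j-k|\gg n/N$ is optimistic under long memory, where normalized ordinates retain slowly decaying correlations even at resolvable frequencies; your conclusion survives only because $\min(1,n/(mN))\geq 1/m$ absorbs the residual $O(1/m)$ relative variance, whereas the paper's $L^1$ bounds transferred to the innovation periodogram sidestep fourth-order computations altogether --- the main economy of its route, at the price of importing the estimates of Robinson and Dalla {\it et al.} wholesale.
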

\begin{proof}
In the sequel, we will use intensively the notation and numerous proofs of Dalla {\it et al.} (2006). However, the results obtained in this paper have to be established again since, we consider $\lambda^{(n)}_{j}=2 \pi \frac j n$ while they considered $\lambda_{j}=2 \pi \frac j {|T|}$. \\
We first define $\displaystyle \eta^*_j=\frac{I_{T}(\lambda^{(n)}_{j})}{c_{0,i}^*\, (\lambda^{(n)}_{j})^{-2d_i^*}}$ and prove:
\begin{equation} \label{vareta}
\E \Big | \frac 1 m \, \sum_{j=1} ^m \big (\eta_j^*- 1 \big ) \Big | \leq C \Big ( \Big ( \frac m n \Big ) ^{\beta_i^*} +\Big (\frac {n}{|T|m}\Big) ^{1/2}\Big )
\end{equation}
where $C>0$ is a constant. For this we will go back to the proof of Proposition 5 in Dalla {\it et al.} (2006). Indeed, with the same notation,  we have:
\begin{eqnarray*}
\E \Big |  \frac 1 m \,  \sum_{j=1} ^m \eta_j^* \Big | & \leq &  \frac 1 m \, \big ( p_{|T|,1}(m)+ p_{|T|,2}(m)+ R_{|T|}(m) \big ) 
\end{eqnarray*}
where $\displaystyle p_{|T|,1}(m)=2\pi \sum_{j=1}^m I_\varepsilon(\lambda^{(n)}_{j})$, $\displaystyle p_{|T|,2}(m)=\sum_{j=1}^m \big (\eta_j- 2\pi I_\varepsilon(\lambda^{(n)}_{j})\big )$ and $\displaystyle R_{|T|}(m)=\sum_{j=1}^m (\eta_j^*-\eta_j)$ with $\displaystyle \eta_j=\frac{I_{T}(\lambda^{(n)}_{j})}{f(\lambda^{(n)}_{j})}$ and $\displaystyle I_\varepsilon(\lambda^{(n)}_{j})=\frac 1 {2\pi |T|} \Big |\sum_{t=1}^{|T|} \varepsilon_t e^{it\lambda^{(n)}_{j}} \Big |^2$. \\
As in Proposition 5 of Dalla {\it et al.} (2006), we can write:
\begin{eqnarray*}
\E |R_{|T|}(m) |& \leq & \sum_{j=1}^m \E \big | \eta_j^*-\eta_j \big | \\
& \leq & C \,m \, \Big ( \frac m n \Big ) ^{\beta_i^*},
\end{eqnarray*}
and therefore
\begin{eqnarray}\label{Rn}
\E \big |R_{|T|}(m)-\E R_{|T|}(m) \big |& \leq & C \,m \, \Big ( \frac m n \Big ) ^{\beta_i^*}.
\end{eqnarray}
Now, following also in Proposition 5 of Dalla {\it et al.} (2006), from Robinson (1995b, Relation (3.17)), adapted with our problem, {\it i.e.} $j \leftrightarrow jT/n$ we have:
\begin{eqnarray}
\nonumber \E \big |\eta_j- 2\pi I_\varepsilon(\lambda^{(n)}_{j})\big |& \leq & C \,\big |\log (1+j|T|/n)\Big |^{1/2} \, \big ( j|T|/n \big ) ^{-1/2} \\
\label{pn2}\Longrightarrow ~\E \big |p_{|T|,2}(m)|  & \leq & C \,\big |\log (1+j|T|/n)\Big |^{1/2} \, \big ( m|T|/n \big ) ^{-1/2}.
\end{eqnarray}
Finally, we have to go back to the proof of (4.9) in Theorem 2 of Robinson (1995b) for bounding $p_{|T|,1}(m)$. Indeed, in this proof and using its notation we have
$$
\E \big | p_{|T|,1}(m)- \E (p_{|T|,1}(m))\big |=\E \Big |\sum_{j=1} 2 \pi \, I_\varepsilon(\lambda^{(n)}_{j}) -1 \Big |\leq \sqrt 2 \, \Big (\Var \Big (\frac m {|T|} \sum_{t \in T} (\varepsilon^2_t -1) \Big )+ \Var \Big ( \sum_{s<t } d_{t-s} \varepsilon_t \varepsilon_s \Big ) \Big )^{1/2}.
$$
But $\displaystyle d_s=\frac 2 {|T|} \, \sum_{j=1} ^m \cos \big ( 2\pi sj/n \big )$ and therefore we easily have $|d_s| \leq 2m /|T|$. Using the usual expression of a sum of cosine functions, we also have $\displaystyle |d_s| \leq \frac 2 {|T|} \, \Big | \frac {\sin (\pi \,sm/n)}{\sin (\pi \,s/n)} \Big | \leq \frac {2n} {\pi s|T|} $. Therefore, using the variance expansion, we deduce that:
$$
\Var \Big (\frac m {|T|} \sum_{t \in T} (\varepsilon^2_t -1) \Big ) \leq C \, \frac {m^2} {|T|},
$$
while the variance of $\sum_{s<t } d_{t-s} \varepsilon_t \varepsilon_s$ is 
\begin{eqnarray*}
O\Big ( |T| \, \sum_{s=1}^{|T|} d_s^2 \Big )&=& O\Big ( |T| \, \sum_{s=1}^{[n/m]} \Big ( \frac {2m}{|T|} \Big )^2 + |T| \, \sum_{s \geq [n/m]} \Big ( \frac {2n}{\pi s |T|} \Big )^2 \Big ) \\
&=& O\Big ( \frac {nm}{|T|} +  \frac {nm}{|T|}  \Big ). 
\end{eqnarray*}
As a consequence we deduce:
\begin{eqnarray}\label{p1}
\E \big | p_{|T|,1}(m)- \E (p_{|T|,1}(m))\big | \leq C\, \Big (\frac {m} {|T|^{1/2}}+\Big (\frac {nm}{|T|}\Big) ^{1/2} \Big ) \leq C \, \Big (\frac {nm}{|T|}\Big) ^{1/2}.
\end{eqnarray}
Finally, using \eqref{Rn}, \eqref{pn2} and \eqref{p1}, we deduce:
$$
\E \Big |  \frac 1 m \,  \sum_{j=1} ^m \eta_j^* \Big | \leq \frac C m \, \Big ( m \Big ( \frac m n \Big ) ^{\beta_i^*} + \log^{1/2} (m|T|/n) \, \big ( m|T|/n \big ) ^{-1/2}+\Big (\frac {nm}{|T|}\Big) ^{1/2} \Big )
$$
and therefore \eqref{vareta} is established. \\
Now a straightforward application of Markov Inequality and Lemma 2 in Dalla {\it et al.} (2006) implies that for any $d \in [0,1/2)$,
\begin{eqnarray} 
\nonumber &&  \Big |\frac 1 m \, \sum_{j=1} ^m \Big (\frac j m \Big ) ^{2d-2d_i^*}\, \eta_j^*- \frac 1 m \, \sum_{j=1} ^m \Big (\frac j m \Big ) ^{2d-2d_i^*}\Big | = O_P \Big ( \Big ( \frac m n \Big ) ^{\beta_i^*} +\Big (\frac {n}{m|T|}\Big) ^{1/2}\Big ) \\
&& \hspace{5mm} \Longrightarrow ~ \Big |\frac 1 m \, \sum_{j=1} ^m \Big (\frac j m \Big ) ^{2d-2d_i^*}\, \eta_j^*- \frac 1 {2d-2d_i^*+1 }\Big | = O_P \Big ( \Big ( \frac m n \Big ) ^{\beta_i^*} +\Big (\frac {n}{m|T|}\Big) ^{1/2}+ m^{2d-2d_i^*-1}\Big ). \label{mieux}
\end{eqnarray}
Since $\displaystyle S_n(T,d,m)=\frac 1 m \, \sum_{j=1}^{m}\big (\frac j m \big )^{2d}I_{T}(\lambda_{j}^{(n)})= \frac {(2\pi )^{2d_i^*}}{c^*_{0,i}} \,  \Big ( \frac n m \Big )^{2d^*_i} \, \frac 1 m \, \sum_{j=1} ^m \Big (\frac j m \Big ) ^{2d-2d_i^*}\, \eta_j^*$, we deduce that for any $N\geq 1$,
\begin{multline} \label{majS}
\sup_{|T|\geq N}\Big |S_n(T,d,m)-\Big ( \frac n m \Big )^{2d_i^*}  \frac {c^*_{0,i} (2\pi )^{-2d_i^*} }{1+2d-2d_i^*} \Big | =\Big ( \frac n m \Big )^{2d_i^*}  \,  O_P \Big ( \Big ( \frac m n \Big ) ^{\beta_i^*} +\Big (\frac {n}{mN}\Big) ^{1/2}+ m^{2d-2d_i^*-1}\Big ).
\end{multline}
For small $N$, for instance such as $N=o( n/m)$, the random right side term is not bounded. However, for any $T \subset T_i^*$, we have $\E \big ( I_{T}(\lambda^{(n)}_{j}) \big ) \leq \sigma_i^2 \big ( 1+2\, C \,   \sum_{k=1}^{|T|}k^{2d_i^*-1} \big ) \leq C \, |T|^{2d_i^*}$. Thus, there exists $C_i>0$ such as for any $\delta>0$, 
\begin{eqnarray}
\label{S2}
\P \Big ( \sup _{d \in [0,1/2)} \Big | S_n(T,d,m)- \frac {c^*_{0,i} (2\pi )^{-2d_i^*} }{1+2d-2d_i^*}  \, |T|^{2d_i^*}\Big |  \geq \delta \Big ) \leq \frac {C_i} \delta  \, |T|^{2d_i^*}.
\end{eqnarray}
Thus we deduce \eqref{lem1} and this achieves the proof of Lemma \ref{lem0}.
\end{proof}
\noindent In the sequel, we define:
\begin{equation}\label{Rnm}
R_n(T,T',d,m)=\frac 1 {2\pi } \, \sum_{t \in T} \sum_{t' \in T'}  X_t \, X_{t'} \, b_n(t'-t,d,m)\quad\mbox{with}\quad b_{n}(k,d,m) = \frac 1 m \, \sum_{j=1}^m \Big ( \frac j m \Big )^{2d} e^{-2 \pi \, i\, \frac{j \,k}{n}}.
\end{equation}
Note that $S_n(T,d,m)$, which is defined in \eqref{S0} can also be written as:
\begin{equation} \label{Sbis}
S_n(T,d,m)=\frac 1 m \, \sum_{j=1}^{m}\big (\frac j m \big )^{2d}I_{T}(\lambda_{j}^{(n)})=\frac 1 {2\pi \, |T|} \, \sum_{s \in T} \sum_{t \in T} X_s \, X_t \, b_n(t-s,d,m).
\end{equation}
The following lemma establish an asymptotic bound for $R_n$ when $T$ and $T'$ are included in distinct stages of the process:
\begin{lemma}\label{lem}
Under the assumptions of Theorem \ref{theo1}, there exists $C>0$ such that for any $j,j' \in \{1,\cdots,K^*+1\}$ where $j\neq j'$, any $T\subset T_j^*$ and $T'\subset T_{j'}^*$, and any $N\in \N^*$, 
\begin{equation}
\label{lem2}
\sup _{d\in [0,1/2)} \max_{\min(|T|,|T'|)\geq N} \!\Big (\min \big (|T|,|T'|, \frac n m \big ) \Big ) ^{- d_j^*-d^*_{j'}} \! \!\!\! \frac 1 {\min(|T|,|T'|)} \, \big | R_n(T,T',d,m) \big |  =  O_P \Big (  \Big (\min \big ( 1 ,  \frac n {mN}\big ) \Big ) ^{1-d_j^*-d^*_{j'}} \Big ).
\end{equation}
\end{lemma}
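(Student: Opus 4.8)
The plan is to treat $R_n(T,T',d,m)$ as a bilinear form in the $X_t$'s and to bound separately its expectation and its fluctuation, then to promote the two pointwise estimates to the claimed uniform bound by Markov's inequality together with a maximal inequality over the admissible pairs $(T,T')$ and a discretization of $d$. Everything rests on two deterministic ingredients: the cross-covariance of $X$ between two distinct stages, and the size of the kernel $b_n(\cdot,d,m)$ introduced in \eqref{Rnm}.

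First I would compute the cross-covariance. For $t\in T_j^*$ and $t'\in T_{j'}^*$ with $j\neq j'$, writing $X_t=\sum_{p\ge 0}a^{(j)}_p\varepsilon_{t-p}$ and $X_{t'}=\sum_{q\ge 0}a^{(j')}_q\varepsilon_{t'-q}$ and using that the innovations are i.i.d. and shared by all stages, only the terms with $t-p=t'-q$ survive, so that (say for $t<t'$)
\begin{equation}\label{tt'}
\E\big(X_tX_{t'}\big)=\sum_{p\ge 0}a^{(j)}_p\,a^{(j')}_{p+(t'-t)}.
\end{equation}
Plugging the class $L(d,\beta,c)$ asymptotics $a^{(i)}_p\sim c_i^*\,p^{d_i^*-1}$ into \eqref{tt'} and comparing the sum with a Beta integral yields $\big|\E(X_tX_{t'})\big|\le C\,|t-t'|^{d_j^*+d_{j'}^*-1}$, the exact cross-stage analogue of the autocovariance expansion \eqref{covr}, now with the summed memory exponent $d_j^*+d_{j'}^*-1<0$. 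On the kernel side, an Abel summation against the monotone weights $(k/m)^{2d}\le 1$ combined with the Dirichlet bound $\big|\sum_{k=1}^{J}e^{-2\pi\,\i\,k\ell/n}\big|\le \min\big(J,\,n/|\ell|\big)$ gives the estimate $|b_n(\ell,d,m)|\le C\,\min\big(1,\,n/(m|\ell|)\big)$, uniformly in $d\in[0,1/2)$.

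Inserting both bounds into $\E R_n=\frac{1}{2\pi}\sum_{t\in T}\sum_{t'\in T'}\E(X_tX_{t'})\,b_n(t'-t,d,m)$ reduces the mean to the deterministic double sum $\sum_{t,t'}|t-t'|^{d_j^*+d_{j'}^*-1}\min(1,n/(m|t-t'|))$. The point is that, because $T$ and $T'$ sit in two disjoint blocks, the number of pairs at a given separation $r$ is at most $\min(|T|,|T'|)$, and I would organize the sum by $r$ and split it at the threshold $r=n/m$: the short-range part is governed by the capped count, the long-range part $r>n/m$ by the kernel decay $n/(mr)$, which makes $\sum_r r^{d_j^*+d_{j'}^*-2}$ converge. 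Combining the two pieces should produce the prefactor $\big(\min(|T|,|T'|,n/m)\big)^{d_j^*+d_{j'}^*}$ together with the decaying factor $\big(\min(1,n/(mN))\big)^{1-d_j^*-d_{j'}^*}$. I expect this long-range part to be the main obstacle: the absolute-value bound on $b_n$ is safe only once the pair count is capped, and obtaining the sharp, logarithm-free exponent $1-d_j^*-d_{j'}^*$ requires carefully balancing the three scales $N$, $n/m$ and the block lengths. If absolute values prove too crude for very unequal segment sizes, the oscillation of $b_n$ on scale $n/m$ can be exploited by a further summation by parts in $t'$, equivalently by passing to the spectral side where, with the Dirichlet kernels $D_T(\mu)=\sum_{t\in T}e^{\i t\mu}$, one finds $\E R_n$ expressed through $\int f_{jj'}(\lambda)\,D_T(\lambda+\lambda_k^{(n)})\overline{D_{T'}(\lambda+\lambda_k^{(n)})}\,d\lambda$, which concentrates near the Fourier frequencies, the cross-spectral density satisfying $f_{jj'}(\lambda)\asymp|\lambda|^{-(d_j^*+d_{j'}^*)}$. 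This cross-change-point cancellation is exactly what is absent from the diagonal analysis of $S_n$ in Lemma \ref{lem0} and in Dalla {\it et al.} (2006).

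The fluctuation $R_n-\E R_n$ I would control through its second moment: expanding $\E(R_n-\E R_n)^2$ into products of covariances of $X$ (the fourth cumulant of the innovations contributing a lower-order term) and estimating each covariance factor by \eqref{tt'}, \eqref{covr} and the kernel bound shows that the variance is at most of the order of the square of the mean bound. A pointwise $O_P$ estimate then follows from Markov's inequality. To reach the supremum over $d\in[0,1/2)$ I would use that $d\mapsto b_n(\ell,d,m)$ is Lipschitz with a controlled constant, reducing the supremum to a fine grid; and to reach the maximum over all $(T,T')$ with $\min(|T|,|T'|)\ge N$ I would invoke a maximal inequality for the double-indexed partial sums, in the same spirit as the passage \eqref{S2} used for $S_n$. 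Combining these gives the stated bound.
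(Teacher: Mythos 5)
Your proposal matches the paper's own proof in all essentials: the cross-stage covariance bound $|\E(X_tX_{t'})|\le C\,|t-t'|^{d_j^*+d_{j'}^*-1}$ obtained from the shared-innovation representation and a Beta-integral comparison, the kernel bound on $b_n$ used to control $\E R_n$ by organizing the double sum by separation with a split at $n/m$, the variance handled through the fourth-moment expansion (cumulant term negligible, the two pairing terms giving bounds of the order of the squared mean, one of them being exactly $(\E R_n)^2$ by symmetry), and the uniformization over $d$ and $(T,T')$ via a maximal-inequality argument, which the paper carries out by citing Lemmas 2.2 and 2.4 of Lavielle and Ludena (2000) rather than by your chaining/discretization. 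The only cosmetic differences are that you work with the $d$-uniform kernel bound $\min(1,n/(m|u|))$ where the paper uses the sharper $d$-dependent bound $\min(1,(n/m)^{1+2d}|u|^{-1-2d})$ before taking the supremum (both yield the same worst case at $d=0$), and your hedge about needing spectral-side cancellation is unnecessary --- the absolute-value bounds suffice, as the paper's computation shows.
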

\begin{proof}
First, we can bound the covariance $\Cov(X_t,X_t')$ with $t \in T\subset T_{j}$ and $t' \in T'\subset T_{j'}$, where $j\neq j'$. Indeed, assuming $t<t'$,
\begin{eqnarray*}
\Cov(X_t,X_{t'}) &= & \E \Big (\sum_{k=0}^\infty a_k^{(j)} \varepsilon_{t-j}\sum_{k'=0}^\infty a_{k'}^{(j')} \varepsilon_{t'-j'} \Big )\\
&= & \sum_{k=0}^\infty a_k^{(j)}a_{t'-t+k}^{(j')}=\Gamma_{T,T'}(|t'-t|),
\end{eqnarray*}
since $(\varepsilon_i)$ is supposed to be white noise with unit variance. Therefore, since $a_k^{(j)}=c_j^*\, k^{d_j^*-1}+ O\big (k^{d_j^*-1-\beta_j^*} \big) $ and $a_k^{(j')}=c_{j'}^*\, k^{d_{j'}^*-1}+ O\big (k^{d_{j'}^*-1-\beta_{j'}^*} \big) $, there exists $C$ such as
\begin{eqnarray*}
\big | a_k^{(j)}a_{t'-t+k}^{(j')} \big | \leq  C \,k^{d_j^*-1}\, (t'-t+k)^{d_{j'}^*-1}\quad \mbox{for any $k \in \N^*$}.
\end{eqnarray*}
As a consequence, there exist $C'>0$ and $C''>0$ such that for $t'>t$, 
\begin{eqnarray}
\nonumber \big | \Gamma_{T,T'}(|t'-t|)\big | &\leq  & C' \, \sum_{k=1}^\infty k^{d_j^*-1}\, (t'-t+k)^{d_{j'}^*-1} \\
\nonumber &\leq  & \frac {C'}{(t'-t)^{1-d_j^*-d_{j'}^*} }  \times \frac 1 {t'-t} \,  \sum_{k=1}^\infty \Big ( \frac k {t'-t} \Big )^{d_j^*-1}\, \Big (1+\frac k {t'-t} \Big )^{d_{j'}^*-1} \\
\label{tt'} & \leq & \Big (C'' \,  \int_0^\infty \frac 1 {x^{1-d_j^*}} \, \frac 1 {(1+x)^{1-d_{j'}^*}}\, dx \Big ) \,\frac 1 {(t'-t)^{1-d_j^*-d_{j'}^*} }.
\end{eqnarray}
Now, using \eqref{Rnm} and \eqref{tt'}, we have:
\begin{eqnarray*}
\E \big (R_n(T,T',d,m) \big ) &=& \frac 1 {2\pi } \, \sum_{t \in T} \sum_{t' \in T'}  \Cov (X_t, X_{t'} \big ) \, b_n(t'-t,d,m) \\
\Longrightarrow \quad \big | \E \big (R_n(T,T',d,m) \big )  \big |& = & \frac 1 {2\pi } \, \sum_{t \in T} \sum_{t' \in T'} \Gamma_{T,T'}(t'-t)\, b_n(t'-t,d,m).
\end{eqnarray*}
The right side term of the previous equality is only depending on $(t'-t)$. Therefore, using the notations $\delta=-1+\min\{|t-t'|,~(t,t')\in T\times T' \}\geq 0$, $\mu=\min\{|T|,|T'|\}$ and $\nu=\max\{|T|,|T'|\}$, it is possible to detail this term in the following way:
\begin{eqnarray*}
\E \big (R_n(T,T',d,m) \big )& = & \frac 1 {2\pi } \, \Big ( \sum_{k =1} ^\mu k\, \Gamma_{T,T'}(\delta+k) \, b_n(\delta+k,d,m)+ \mu \sum_{k =\mu+1} ^\nu \Gamma_{T,T'}(\delta+k) \, b_n(\delta+k,d,m)\\
&& \hspace{5cm} +\sum_{k =\nu+1} ^{\nu+\mu} (\nu+\mu -k) \,\Gamma_{T,T'}(\delta+k) \, b_n(\delta+k,d,m) \Big ) .
\end{eqnarray*}
But from usual calculations, for any $d \in [-1/2,1/2)$, there exists $C(d)>0$ such as we have
\begin{equation}\label{bn}
|b_{n}(u,d,m)| \leq C(d) \, \min \Big \{ 1 \, , \, \Big ( \frac n m \Big )^{1+2d}|u|^{-1-2d} \Big \}\quad\mbox{for $u \in \Z$}.
\end{equation}
As a consequence, if $\mu+\nu\leq n/m$, we obtain:
\begin{eqnarray}
\nonumber \big | \E \big (R_n(T,T',d,m) \big )  \big |& \leq & C \, \Big ( \sum_{k =1} ^\mu k^{d_j^*+d_{j'}^*}+ 2\mu \sum_{k =\mu+1} ^{\nu+\mu} k^{-1+d_j^*+d_{j'}^*}  \Big )\\
\label{mupetit}& \leq & C \, \mu \, \nu^{d_j^*+d_{j'}^*}.
\end{eqnarray}
And when $\mu\geq n/m$, we can write:
\begin{eqnarray}
\nonumber \sum_{k =1} ^\mu k\, \Gamma_{T,T'}(\delta+k) \, b_n(\delta+k,d,m)& \leq & C\, \sum_{k =1} ^{[n/m]} k^{d_j^*+d_{j'}^*}+C\,  \Big ( \frac n m \Big )^{1+2d} \, \sum_{k =[n/m]} ^\mu k\, \frac {|\delta+k|^{-1-2d}}{|\delta+k|^{1-d_j^*-d_{j'}^*}} \\
\nonumber& \leq & C\, \Big ( \frac n m \Big )^{1+d_j^*+d_{j'}^*}+ C\, \Big ( \frac n m \Big )^{1+2d} \, \sum_{k =[n/m]} ^\mu k^{-1+d_j^*+d_{j'}^*-2d} \\
\nonumber& \leq & C\,  \Big ( \frac n m \Big )^{d_j^*+d_{j'}^*}\, \mu  \, \Big ( \frac n {m \mu} \Big )^{1+\min (0\,,\, 2d-d_j^*-d_{j'})} \, \big (\log (\mu)\big )^{\1_{2d=d_j^*+d_{j'}}}.
\end{eqnarray}

Finally, by performing the same type of calculations several times, we obtain:
\begin{eqnarray}
\nonumber \big | \E \big (R_n(T,T',d,m) \big )  \big |& \leq & C \, \mu \, \Big ( \min \big (\frac n m \, , \, \nu \big )\Big ) ^{d_j^*+d_{j'}^*}\Big ( \min \big ( 1\, ,\, \frac n {m\mu} \big ) \Big )^{1+\min (0\,,\, 2d-d_j^*-d_{j'})} \, \big (\log (\mu)\big )^{\1_{2d=d_j^*+d_{j'}}} \Big ) \\
\label{ER}    \hspace{1cm}\Longrightarrow  \sup_{d \in [0,1/2)} ~\quad&& \hspace{-0.8cm} \big | \E \big (R_n(T,T',d,m) \big )  \big | \leq  C \, \mu \,  \Big ( \min \big (\frac n m \, , \, \nu \big )\Big ) ^{d_j^*+d_{j'}^*} \Big (\min \big ( 1\, ,\, \frac n {m\mu} \big ) \Big )^{1-d_j^*-d_{j'}}\!\!\!\!.
\end{eqnarray}
Now we are going to bound $\Var \big (R_n(T,T',d,m) \big )$. We have:
\begin{eqnarray*}
\Var \big (R_n(T,T',d,m) \big )&=& \frac 1 {4\pi^2 } \, \sum_{t \in T} \sum_{t' \in T'}\sum_{s \in T} \sum_{s' \in T'}  \Cov \big ( X_t X_{t'}\,, \,X_s X_{s'} \big )   \, b_n(t'-t,d,m)\, b_n(s'-s,d,m).
\end{eqnarray*}
Without loss of generality, set $t \leq s<t' \leq s'$. We have: 
\begin{eqnarray*}
\Cov \big ( X_t X_{t'}\,, \,X_s X_{s'} \big )&=& \sum_{k=0}^\infty \sum_{\ell=0}^\infty \sum_{k'=0}^\infty \sum_{\ell'=0}^\infty a^{(j)}_{t-k}a^{(j)}_{s-\ell}a^{(j')}_{t'-k'}a^{(j')}_{s'-\ell'} \, \Cov \big ( \varepsilon_{t-k} \varepsilon_{t'-k'}\,, \,\varepsilon_{s-\ell} \varepsilon_{s'-\ell'} \big ). 
\end{eqnarray*}
Only two cases implies $\Cov \big ( \varepsilon_{t-k} \varepsilon_{t'-k'}\,, \,\varepsilon_{s-\ell} \varepsilon_{s'-\ell'} \big ) \neq 0$ since $(\varepsilon_i)$ is a white noise. For the first one, it is equal to $\mu_4-\sigma^4$ and is obtained when $t-k=t'-k'=s-\ell=s'-\ell'$. For the second one, it is equal to $\sigma^4$ and is obtained when ($t-k=s-\ell)\neq (t'-k'=s'-\ell')$ or $(t-k=s'-\ell')\neq (t'-k'=s-\ell)$. As a consequence,
\begin{eqnarray*}
\Cov \big ( X_t X_{t'}\,, \,X_s X_{s'} \big )&=& (\mu_4-\sigma^4) \,\sum_{k=0}^\infty  a^{(j)}_{k}a^{(j)}_{s-t+k}a^{(j')}_{t'-t+k}a^{(j')}_{s'-t+k} +\sigma^4 \, \sum_{k=0}^\infty\sum_{k'=0, k'\neq k}^\infty a^{(j)}_{k}a^{(j)}_{s-t+k}a^{(j')}_{k'}a^{(j')}_{s'-t'+k'} \\
&& \hspace{7cm}+ \sigma^4 \, \sum_{k=0}^\infty\sum_{\ell=0, \ell\neq k}^\infty a^{(j)}_{k}a^{(j')}_{s'-t+k}a^{(j)}_{\ell}a^{(j')}_{t'-s+ \ell} \\
\Longrightarrow ~\big |\Cov \big ( X_t X_{t'}\,, \,X_s X_{s'} \big ) \big |& \leq & C \,\sum_{k=1}^\infty  \big (k(s-t+k)\big )^{d_j^*-1} \big ( (t'-t+k) (s'-t+k) \big )^{d_{j'}^*-1} \\ && +C \, \Big ( \sum_{k=1}^\infty \big (k(s-t+k)\big )^{d_j^*-1}\Big ) \Big (\sum_{k'=1}^\infty  \big (k'(s'-t'+k')\big )^{d_{j'}^*-1}\Big ) \\ 
&& \hspace{1cm}+ C \, \Big ( \sum_{k=1}^\infty k^{d_j^*-1}(s'-t+k)^{d_{j'}^*-1}\Big ) \Big (\sum_{\ell=1}^\infty  \ell^{d_j^*-1}(t'-s+ \ell)^{d_{j'}^*-1}\Big ).
\end{eqnarray*}
Using the Cauchy-Schwarz Inequality, we have
$$
\sum_{k=1}^\infty  \big (k(s-t+k)\big )^{d_j^*-1} \big ( (t'-t+k) (s'-t+k) \big )^{d_{j'}^*-1} \leq \Big (\sum_{k=1}^\infty  \big (k(s-t+k)\big )^{2d_j^*-2} \Big )^{1/2} \Big ( \sum_{k=1}^\infty  \big ( (t'-t+k) (s'-t+k) \big )^{2d_{j'}^*-2}\Big )^{1/2}
$$
Now we apply the same trick as in \eqref{tt'} and obtain since $s'>t'$,
$$
\sum_{k=1}^\infty  \big (k(s-t+k)\big )^{d_j^*-1} \big ( (t'-t+k) (s'-t+k) \big )^{d_{j'}^*-1} \leq C \, (s-t+1)^{2d_j^*-3/2}  (t'-t+1)^{2d_{j'}^*-3/2},
$$
and more generally,
\begin{eqnarray}
\nonumber \big |\Cov \big ( X_t X_{t'}\,, \,X_s X_{s'} \big ) \big |& \leq & C \Big ( (s-t+1)^{2d_j^*-3/2}  (t'-t+1)^{2d_{j'}^*-3/2} \\ 
\label{covXX}&& +(s-t+1)^{2d_j^*-1}(s'-t'+1)^{2d_{j'}^*-1} + \big ((s'-t)(t'-s)\big )^{d_j^*+d_{j'}^*-1}\Big ).
\end{eqnarray}

\begin{eqnarray}
\nonumber\Var \big (R_n(T,T',d,m) \big )
&\leq & C \, \sum_{t \in T} \sum_{s \in T} \sum_{t' \in T'} \sum_{s' \in T'} \big | \Cov \big ( X_t X_{t'}\,, \,X_s X_{s'} \big ) \,b_{n}(t-t',d,m)b_{n}(s-s',d,m) \big | \\
\label{J1J2J3}&\leq & C \big (J_1+J_2 +J_3 \big ),
\end{eqnarray}
with 
$\displaystyle \left \{\begin{array} {ccl}J_1&=&\displaystyle \sum_{t \in T} \sum_{s \in T} \sum_{t' \in T'} \sum_{s' \in T'} (|t-s|+1)^{2d_j^*-3/2} (|t'-t|+1)^{2d_{j'}^*-3/2}\, \big | b_{n}(t-t',d,m)b_{n}(s-s',d,m) \big | \\ 
J_2&=&\displaystyle \sum_{t \in T} \sum_{s \in T} \sum_{t' \in T'} \sum_{s' \in T'} (1+|t-s|)^{2d_j^*-1} (1+|t'-s'|)^{2d_{j'}^*-1}\, \big | b_{n}(t-t',d,m)b_{n}(s-s',d,m) \big | \\ 
J_3&=&\displaystyle \sum_{t \in T} \sum_{s \in T} \sum_{t' \in T'} \sum_{s' \in T'} \big ((s'-t)(t'-s)\big )^{d_j^*+d_{j'}^*-1}\, \big | b_{n}(t-t',d,m)b_{n}(s-s',d,m) \big |
\end{array} \right . $. \\
As a consequence, we can easily see that $J_1$ is negligible with respect to $J_2$ since $2d-3/2<2d-1$. Concerning $J_2$ we use the same arguments than in Lavielle and Ludena (2000). Then, 
\begin{eqnarray*}
J_2&\leq & C  \sum_{t \in T} \sum_{s \in T}(1+|t-s|)^{2d_j^*-1} \sum_{t' \in T'} \sum_{s' \in T'} (1+|t'-s'|)^{2d_{j'}^*-1} \big |b_{n}(t-t',d,m)b_{n}(s-s',d,m) \big | \\
&\leq & C \,  |T|^{2d_j^*+1} \, \Big ( \sum_{u=0}^{|T|+|T'|} |b_{n}(u,d,m)|^2 +2 \sum _{u=0}^{|T|+|T'|} |b_{n}(u,d,m)| \sum_{v=u+1}^{|T|+|T'|} |b_{n}(v,d,m)| \, |v-u|^{2d_{j'}^*-1}  \Big ).
\end{eqnarray*}
As a consequence, using \eqref{bn},
\begin{multline}\label{b1}
\sum_{u=0}^{|T|+|T'|} |b_{n}(u,d,m)|^2 \leq C \, \Big ( \sum_{u=0}^{\min(|T|+|T'|,n/m)}\hspace{-5mm} 1 +\hspace{-5mm} \sum_{u=\min(|T|+|T'|,n/m)}^{|T|+|T'|}\hspace{-5mm} \big ( n /m \big )^{1+2d}|u|^{-1-2d} \Big ) \\
 \leq C(d) \, \min \big (|T|+|T'|,\frac n m\big ). 
\end{multline} 
Moreover, 
\begin{eqnarray}
\nonumber & & \hspace{-0.5cm} \sum _{u=0}^{|T|+|T'|} |b_{n}(u,d,m)| \sum_{v=u+1}^{|T|+|T'|} |b_{n}(v,d,m)| \, |u-v|^{2d_{j'}^*-1}  \\
\nonumber && \leq C \, \Big \{  \sum_{u=0}^{\min(|T|+|T'|,n/m)} \Big ( \sum_{v=u+1}^{\min(|T|+|T'|,n/m)} \hspace{-5mm}(v-u)^{2d_{j'}^*-1}  + \big ( \frac n m \big )^{1+2d} \hspace{-5mm}\sum_{v={\min(|T|+|T'|,n/m)}}^{|T|+|T'|}\hspace{-5mm} v^{-1-2d}(v-u)^{2d_{j'}^*-1} \Big ) \\
\nonumber && \hspace{6cm}  + \big ( \frac n m \big )^{2+4d} \sum_{u={\min(|T|+|T'|,n/m)}}^{|T|+|T'|}   \sum_{v=u+1}^{|T|+|T'|} (uv)^{-1-2d}(v-u)^{2d_{j'}^*-1}  \Big \} \\
\label{b2}& &\leq C(d) \, \Big ( \min \big (|T|+|T'|,\frac n m\big ) \Big )^{1+2d_{j'}^*}
\end{eqnarray} 
after classical computations. From \eqref{b1} and \eqref{b2}, we obtain:
\begin{equation}\label{b3}
J_2 \leq C (d)\, |T|^{2d_j^*+1} \, \Big ( \min \big (|T|+|T'|,\frac n m\big ) \Big )^{1+2d_{j'}^*}.
\end{equation}
Using the same decomposition of $J_2$ but beginning with $s',t'\in T'$ instead of $s,t\in T$, we can also replace $T$ by $T'$ in the previous bound. As a consequence, we obtain:
\begin{equation}\label{J2fin}
J_2 \leq C (d)\, \min \{ \mu^{2d_j^*+1} \, \Big ( \min \big (\nu,\frac n m\big ) \Big )^{1+2d_{j'}^*} \, , \, \mu^{2d_{j'}^*+1} \, \Big ( \min \big (\nu,\frac n m\big ) \Big )^{1+2d_{j}^*} \Big \}.
\end{equation}
Finally using symmetry reasons we also have $J_3=\big ( \E \big (R_n(T,T',d,m) \big ) \big ) ^2$ and therefore:
\begin{equation}\label{J3fin}
J_3 \leq  C \, \mu^2 \,  \Big ( \min \big (\frac n m \, , \, \nu \big )\Big ) ^{2d_j^*+2d_{j'}^*} \Big (\min \big ( 1\, ,\, \frac n {m\mu} \big ) \Big )^{2-2d_j^*-2d^*_{j'}}.
\end{equation}
As a consequence, using \eqref{J1}, \eqref{J2}, \eqref{J3} and \eqref{J2fin}, \eqref{J3fin}, we obtain that there exists $C>0$ such as:
\begin{equation}\label{VarR}
\sup_{d \in [0,1/2)}  \Var \big (R_n(T,T',d,m) \big )\leq  C \,  \mu ^2 \,\Big ( \min \big (\frac n m \, , \, \nu \big )\Big ) ^{2d_j^*+2d_{j'}^*}  \Big (\min \big ( 1\, ,\, \frac n {m\mu} \big ) \Big )^{2-2d_j^*-2 d^*_{j'}}.
\end{equation}
Therefore, with $\E \big (R^2_n(T,T',d,m) \big )=\Var \big ( R_n(T,T',d,m) \big )+ \E^2 \big ( R_n(T,T',d,m) \big )$, we have for any $N \leq n$, 
\begin{multline}
\sup _{d\in [0,1/2)} \max_{\min(|T|,|T'|)\geq N} \Big ( \min \big (|T|,|T'|\, , \,  \frac n m \big ) \Big ) ^{- d_j^*-d^*_{j'}}   \frac 1 {\min(|T|,|T'|)^2} \, \E \big (R^2_n(T,T',d,m) \big ) \\
\leq C \, \Big (\min \big ( 1\, , \,  \frac n {mN}\big ) \Big ) ^{2-2d_j^*-2 d^*_{j'}}
\label{b5} 
\end{multline}
with $C>0$ that achieves the proof of \eqref{lem2} using Lemma 2.2 and 2.4 in Lavielle and Ludena (2000).
\end{proof}
~\\
\noindent Now the proof of the consistency of $\widehat \tau$ can be established: \\
\begin{proof}[Proof of Theorem \ref{theo1}]
{\it Mutatis mutandis}, we follow here a similar proof than in Lavielle and Ludena (2000). Denote 
\begin{eqnarray} \label{U}
U_n({\bf t},{\bf d},m)=J_n(K^*,{\bf t},{\bf d},m)-J_n(K^*,{\bf t}^*,{\bf d}^*,m),
\end{eqnarray}
where $J_n$ is defined in \eqref{defJ}. Then, using \eqref{Sbis}, we can write that for any ${\bf d}$ and ${\bf t}$, 
$$
U_n({\bf t},{\bf d},m)=\frac 1 n \, \Big [ \sum_{k=1}^{K^*+1} \Big (n_k \, \log \big (S_n(T_{k},d_k,m)\big )- n_k^* \,\log \big ( S_n(T^*_k,d_k^*,m)\big ) \Big ) \Big ] -\frac {\ell_m} n \,  \sum_{k=1}^{K^*+1} 2\, (n_k d_k-n_k^*d^*_k).
$$ 
Now using a decomposition of each $S_n$ on the `true' periods, we can write:
\begin{eqnarray}
\nonumber S_n(T_{k},d_k,m)&=& \sum_{j=1}^{K^*+1} \frac { n_{kj}}{n_k} \, S_n(T_{kj},d_k,m)+ \frac 2 {n_k} \,  \sum_{j=1}^{K^*+1} \sum_{j'=1,~j\neq j}^{K^*}R_n(T_{kj},T_{kj'},d_k,m),
\end{eqnarray}
with $R_n$ defined in \eqref{Rnm}. As a consequence,
\begin{eqnarray} 
\nonumber U_n({\bf d},{\bf t},m)&=& \frac 1 n \, \sum_{k=1}^{K^*+1} \Big [n_k \,  \log \Big (\sum_{j=1}^{K^*+1} \frac { n_{kj}}{n_k} \Big (  S_n(T_{kj},d_k,m)+ \hspace{-5mm}\sum_{j'=1,~j\neq j}^{K^*+1}\frac 2 {n_{kj}} \, R_n(T_{kj},T_{kj'},d_k,m) \Big )\Big )  \\
\nonumber && \hspace{5cm}  -n_k^*  \, \log \big ( S_n(T^*_k,d_k^*,m)\big ) \Big ] +\frac {\ell(m)} n \,  \sum_{k=1}^{K^*+1} 2\, (n_k d_k-n_k^*d^*_k) \\
\nonumber &\geq & \frac 1 n \, \sum_{k=1}^{K^*+1} \sum_{j=1}^{K^*+1}  n_{kj} \, \log \Big (S_n(T_{kj},d_k,m)+  \hspace{-5mm} \sum_{j'=1,~j\neq j}^{K^*+1}\frac 2 {n_{kj}} \, R_n(T_{kj},T_{kj'},d_k,m) \Big )-n_k^*  \, \log \big ( S_n(T^*_k,d_k^*,m)\big ) \Big ] \\
\nonumber && \hspace{5cm} +\frac {\ell(m)} n \,  \sum_{k=1}^{K^*+1} 2\, (n_k d_k-n_k^*d^*_k) \\
\nonumber &\geq & \frac 1 n \, \sum_{k=1}^{K^*+1} \sum_{j=1}^{K^*+1}  n_{kj} \, \Big [ \log \Big (S_n(T_{kj},d_k,m)+  \hspace{-5mm} \sum_{j'=1,~j\neq j}^{K^*+1}\frac 2 {n_{kj}} \, R_n(T_{kj},T_{kj'},d_k,m) \Big )+ 2\,d_k\ell(m) \Big ]  \\
\label{U2} && \hspace{5cm} -\frac 1 n \,  \sum_{k=1}^{K^*+1} n_k^*  \, \Big (\log \big ( S_n(T^*_k,d_k^*,m)\big ) +   2\,d^*_k\ell(m) \Big )  
\end{eqnarray}
using the concavity of $x \mapsto \log(x)$ and with $n_k=\sum_{j=1}^{K^*+1}n_{kj}$. 
Now we are going to use Lemma \ref{lem0} and \ref{lem}. Therefore:
$$
\Big (\frac {n}m \Big )^{-2d_j^*} \Big ( S_n(T_{kj},d_k,m)+  \hspace{-5mm} \sum_{j'=1,~j\neq j}^{K^*+1}\frac 2 {n_{kj}} \, R_n(T_{kj},T_{kj'},d_k,m)\Big ) =\frac {c_{0,j}^*(2\pi)^{-2d_j^*}} {1+2d_k-2d_j^*}+\varepsilon_{kj},
$$
with $\varepsilon_{kj}=O_P(1)$ when $n_{kj}=O(n/m)$ and $\varepsilon_{kj}=o_P(1)$ for $n=o(n_{kj}m)$. As a consequence, 
from \eqref{U2}, Lemma \ref{lem0} and \ref{lem}, we deduce that there exists a random variable $D(m,n)$ such as $D(m,n) \limiteprobanm 0$ satisfying  for any ${\bf t}$ and $\bf d$,
\begin{eqnarray} 
\nonumber  
U_n({\bf t},{\bf d},m) 
\nonumber  & \geq &\sum_{k=1}^{K^*+1}  \sum_{j=1}^{K^*+1} \frac {n_{kj}}n \,  \big ( s(d_j^*,d_k)-s(d_j^*,d_j^*) \big )-|D(m,n)|
\end{eqnarray} 
where for $d\in [0,1/2)$,
\begin{equation} \label{s}
s(d_j^*,d)=2d_j^* \, \log \big ( n/m \big ) + \log \big (c_{0,j}^*(2\pi)^{-2d_j^*} \big ) - \log \big (1+2d-2d_j^* \big )-2d.
\end{equation}
Now, simple computations also imply  
\begin{equation}\label{U4}
U_n({\bf t},{\bf d},m)
\geq   \sum_{k=1}^{K^*+1}  \sum_{j=1}^{K^*+1} \frac {n_{kj}}n \,  \big ( u(d_j^*,d_k)-u(d_j^*,d_j^*) \big ) -|D(m,n)|
\end{equation}
with $u(d_j^*,d)= - \log \big (1+2d-2d_j^* \big )+2d$. Remark that $u(d_j^*,d_k)-u(d_j^*,d_j^*) > 0$ for any $d_j^* \neq d_k$ and of course $u(d_j^*,d_j^*)-u(d_j^*,d_j^*) =0$. Now we could use Lemma 2.3 of Lavielle (1999, p.88), adapted in Lemma 3.3 of Lavielle and Ludena (2000, p.858) and we obtain that there exists $C^*>0$ depending only on ${\bf d}^*$ such as
\begin{eqnarray}\label{lemme23}
\sum_{k=1}^{K^{*}+1}\sum_{j=1}^{K^{*}+1}\frac{n_{kj}}{n}\,\big(u(d_{j}^{*},d_{k})-u(d_{j}^{*},d_{j}^{*})\big)\geq\frac{C}{n}\|{\bf t}-{\bf t^{*}}\|_{\infty},  
\end{eqnarray}
and $\| {\bf t}-{\bf t^*} \|_\infty=\max_{1 \leq k \leq K^*} \big \{ | t_k-t_k^*| \big \}$. \\
Therefore, it is also possible to write that for any $\delta >0$,
\begin{eqnarray}
\nonumber 
\P \big ( \| \widehat \tau - \tau^* \|_\infty > \delta \big ) &\leq & \P \Big (  \inf _{{\bf d} \in [0,1/2)^{K^*+1}} \min_{{\bf t } \in {\cal T}_{K^*}(n \delta)} U_n({\bf t},{\bf d},m) <0  \Big ) \\
\nonumber  & \leq & \P \Big (  \inf _{{\bf d} \in [0,1/2)^{K^*+1}} \min_{{\bf t } \in {\cal T}_{K^*}(n \delta) }   \sum_{k=1}^{K^*+1}  \sum_{j=1}^{K^*+1} \frac {n_{kj}}n \,  \big ( u(d_j^*,d_k)-u(d_j^*,d_j^*) \big )-|D(m,n)| <0  \Big ) \\
\nonumber  & \leq & \P \Big ( \delta -|D(m,n)|<0  \Big ) \limitemn 0,
\end{eqnarray}
since for ${\bf t} \in {\cal T}_{K^*}(n \delta)$ we have $\| {\bf t}-{\bf t^*} \|_\infty \geq \delta \, n$ and for any $k \in\{1,\cdots,K^*\}$. 
This achieves the proof.
\end{proof}

\begin{proof}[Proof of Theorem \ref{theo2}]
Assume  with no loss of generality that $K^*=1$. From Theorem \ref{theo1}, there exists  $ (u_n)_n $
a sequence of real numbers satisfying
  $u_n\sqrt m /n\limiten \infty $, $u_n/n \limiten 0$ ~
  and ~ $ \P \big ( | \widetilde{ t_1}  - {t}_1^* | > u_n \big ) \limiten 0 $. For $\delta > 0$, as we have
$$ \P \Big  ( | \widetilde{ t_1}  - {t}_1^*| > \delta \, \frac {n }{\sqrt m}\Big )
    \leq
    \P\Big  (  \delta \, \frac {n }{\sqrt m}< |\widetilde{ t_1}  - {t}_1^*| \leq  u_n \Big ) + \P\big ( | \widetilde{ t_1}  - {t}_1^* | > u_n \big )
$$
As a consequence, it is sufficient to show that $ \P\big ( \delta \, n /\sqrt m<| \widetilde{ t_1}  - {t}_1^*| \leq u_n  \big ) \limiten 0$. \\
Denote $V_{\delta,n,m}= \{~t \in \Z / ~~  \delta \, n/\sqrt m <  |t_1-t_1^*| \leq u_n
~\}$. Then,
\begin{equation}\label{prob1}
  \P\big(\delta \, \frac {n }{\sqrt m} <| \widetilde{t}_1 - t_1^*| \leq u_n  \big) \leq  \P\Big( \underset{t_1\in V_{\delta,n,m}}{\mbox{min}}\big ({J}_n(K^*,t_1,(\widetilde d_1,\widetilde d_2),m)- {J}_n(K^*,t_1^*,(\widehat d_1^*,\widehat d_2^*),m)\big ) \leq 0 \Big), 
\end{equation}
where $\widehat d_i^*$ are defined in \eqref{Esti*}. \\
Let $t_1 \in V_{\delta,n,m} $ and with no loss of generality chose $t_1> t_1^*$. Then $n_1=t_1$, $n_2=n-t_1$, $n_{11}=t_1^*$, $n_{12}=t_1-t_1^*$, $n_{21}=0$ and $n_{22}=n-t_1$. Then $T_1^*=\{1,\ldots,t_1^*\}$, $T_2^*=\{t_1^*+1+1,\ldots,n\}$, $T_1=\{1,\ldots,t_1\}$, $T_{11}=T_1^*=\{1,\ldots,t_1^*\}$, $T_{12}=\{t_1^*+1,\ldots,t_1\}$, $T_2=\{t_1+1,\ldots,n\}=T_{22}$. \\
On the one hand, using results of Lemma \ref{lem0} and \ref{lem1}, since $t_1/n \limiten \tau_1$ and $(t_1-t_1^*)/n \limiten 0$, we can write $\displaystyle \frac {\frac 1 {t_1-t_1^*} \, R_n(T_{11},T_{12},\widetilde d_1,m)}{S_n(T_1^*,\widetilde d_1,m)} =\Big ( \frac n m \Big )^{1-2d_1^*}O_P\Big (\frac {1}{(t_1-t_1^*)^{1-d_1^*-d_2^*} }\Big ) $. Therefore, using again the concavity of the logarithm function, we have:
\begin{eqnarray*}
J_n(K^*,t_1,(\widetilde d_1,\widetilde d_2),m)&\hspace{-3mm}=&\hspace{-3mm}\frac 1 n \Big \{  t_1 \, \log \Big ( \frac {t_1^*} {t_1}  \, S_n(T_1^*,\widetilde d_1,m)+\frac {t_1-t_1^*} {t_1} \,  S_n(T_{12},\widetilde d_1,m)+ \frac 2 {t_1} \,  R_n(T_{11},T_{12},\widetilde d_1,m) \Big )\\
&&\hspace{1cm}+(n-t_1) \log \big (S_n(T_{22},\widetilde d_2,m)  \big )+2 \, \ell(m) \big ( t_1 \widetilde d_1+ (n-t_1)\widetilde d_2\big ) \Big \} \\
& \hspace{-3mm}\geq &\hspace{-3mm} \frac 1 n \Big \{ t^*_1 \, \log \big (  S_n(T_1^*,\widetilde d_1,m) \big )+ (t_1-t_1^*) \log \big (S_n(T_{12},\widetilde d_1,m) \big )+(n-t_1) \log \big (S_n(T_{22},\widetilde d_2,m)  \big ) \\
&&\hspace{3cm}+2 \, \ell(m) \big ( t_1 \widetilde d_1+ (n-t_1)\widetilde d_2\big )+\Big ( \frac n m \Big )^{1-2d_1^*}O_P\Big (\frac {1}{(t_1-t_1^*)^{1-d_1^*-d_2^*} }\Big )\Big \} \\
& \hspace{-3mm}\geq &\hspace{-3mm} \frac 1 n \Big \{ t^*_1 \, W_n(T_1^*,\widetilde d_1,m) + (t_1-t_1^*) W_n(T_{12},\widetilde d_1,m) +(n-t_1) W_n(T_{22},\widetilde d_2,m) \\
&& \hspace{8cm} +\Big ( \frac n m \Big )^{1-2d_1^*}O_P\Big (\frac {1}{(t_1-t_1^*)^{1-d_1^*-d_2^*} }\Big )\Big \}.
\end{eqnarray*}
On the other hand,  we also have:
\begin{equation*}
J_n(K^*,t_1^*,(\widehat d_1^*,\widehat d_2^*),m)=\frac 1 n \Big \{ t^*_1 \, W_n(T_1^*,\widehat d^*_1,m) +(n-t^*_1) W_n(T_{2}^*,\widehat d_2^*,m) \Big \}.
\end{equation*}
First we remark that from the definition of $\widehat d_1^*$,
\begin{eqnarray*}
W_n(T^*_1,\widehat d_1^*,m) \leq W_n(T^*_1,\widetilde d_1,m).
\end{eqnarray*}
Therefore,
\begin{eqnarray}
\nonumber   &&\hspace{-0.8cm}  
\frac n {t_1-t_1^*} \Big \{ J_n(K^*,t_1,(\widetilde d_1,\widetilde d_2),m)-
J_n(K^*,t_1^*,(\widehat d_1^*,\widehat d_2^*),m) \Big \} \\
\nonumber && \hspace{2cm} \geq \frac 1 {t_1-t_1^*} \, \Big \{(t_1-t_1^*) W_n(T_{12},\widetilde d_1,m) +(n-t_1) W_n(T_{22},\widetilde d_2,m)\\
\label{J1} && \hspace{5cm} - (n-t^*_1) W_n(T_{2}^*,\widehat d_2^*,m) +\Big ( \frac n m \Big )^{1-2d_1^*}O_P\Big (\frac {1}{(t_1-t_1^*)^{1-d_1^*-d_2^*} }\Big )\Big \}.
\end{eqnarray}
Since $t_1 \in V_{\delta,n,m} $, implying $|T_{2}^* |/n \limiteprobamn (1-\tau_1^*)$ and $|T_{22}|/n \limiteprobamn (1-\tau_1^*)$, Lemma \ref{lem0} and more precisely inequality \eqref{majS} can be applied. Then, conditionally to $\widetilde d_1$, $\widetilde d_2$ and $\widehat d_2^*$, we obtain:
\begin{eqnarray*}
W_n(T_{12},\widetilde d_1,m) &=& 2d_2^* \, \log \big ( n/m \big ) + \log \big (c_{0,2}^*(2\pi)^{-2d_2^*} \big ) - \log \big (1+2\widetilde d_1-2d_2^* \big )-2\widetilde d_1 \\
&& \hspace{3cm}+ O_P \Big [\big ( \frac {m } n \big )^{\beta_2^*} + \big (\frac n {m(t_1-t_1^*)}  \big )^{1/2 }+m^{2\widetilde d_1-2d_2^*-1}\Big ] \\
W_n(T_{22},\widetilde d_2,m)&=& 2d_2^* \, \log \big ( n/m \big ) + \log \big (c_{0,2}^*(2\pi)^{-2d_2^*} \big ) - \log \big (1+2\widetilde d_2-2d_2^* \big )-2\widetilde d_2\\
&& \hspace{3cm}+  O_P \Big [ \big ( \frac {m } n \big )^{\beta_2^*} + m^{-1/2 } + m ^{2\widetilde d_2-2d_2^*-1}\Big ] \\ 
W_n(T_{2}^*,\widehat d_2^*,m) &=& 2d_2^* \, \log \big ( n/m \big ) + \log \big (c_{0,2}^*(2\pi)^{-2d_2^*} \big ) - \log \big (1+2\widehat d_2^*-2d_2^* \big )-2\widehat d_2^*\\
&& \hspace{3cm}+  O_P \Big [ \big ( \frac {m } n \big )^{\beta_2^*} + m^{-1/2 }+ m ^{2\widehat d_2^*-2d_2^*-1}\Big ],
\end{eqnarray*}
since $\ell(m)=\frac 1 m \, \sum_{j=1}^m \log(j/m)=-1+ O(m^{-1})$ which is negligible with respect to $O_P  \big ( m^{-1/2 }\big )$. Therefore, \eqref{J1} becomes:
\begin{eqnarray}
\nonumber   &&\hspace{-1.6cm}  
\frac n {t_1-t_1^*} \Big \{ J_n(K^*,t_1,(\widetilde d_1,\widetilde d_2),m)-
J_n(K^*,t_1^*,(\widehat d_1^*,\widehat d_2^*),m) \Big \} \\
\nonumber && \hspace{-0.9cm} \geq \frac 1 {t_1-t_1^*} \, \Big \{-(t_1-t_1^*) \big (\log \big (1+2\widetilde d_1+2d_2^* \big )-2\widetilde d_1 \big ) -(n-t_1)\big ( \log \big (1+2\widetilde d_2-2d_2^* \big )+2\widetilde d_2 \big ) \\
\nonumber && \hspace{0cm}+ \frac n m\, O_P\Big ( \frac {m^{\beta_2^*+1} } {n^{\beta_2^*}} + m^{1/2}+ m ^{2\widetilde d_2-2d_2^*}+ m ^{2\widehat d_2^*-2d_2^*}+ \frac n {t_1-t_1^*}m ^{2\widehat d_1^*-2d_2^*}+\frac {n^{-2d_1^*}}{m^{-2d_1^*}(t_1-t_1^*)^{1-d_1^*-d_2^*} } \Big ) \\
\label{J2} && \hspace{9cm}+ (n-t^*_1) \big (\log \big (1+2\widehat d_2^*-2d_2^* \big )+2\widehat d_2^*\big ) \Big \}.
\end{eqnarray}
$t_1$ is supposed to belong to $ V_{\delta,n,m}$ and therefore $t_1\geq t^*_1+\delta n/\sqrt m$. Moreover, from Dalla {\it et al.} (2006, p. 221), when $m$ is such as $m=o\big (n^{2\beta_2^*/(1+2\beta_2^*)}\big ) $,  then:
\begin{equation}\label{TLCd2}
\widetilde d_2=d_2^* + O_P\big (m^{-1/2} \big )\quad \mbox{and}\quad \widehat d_2^*=d_2^* + O_P\big (m^{-1/2} \big ).
\end{equation}
Then, from \eqref{J2}, we obtain after computations,
\begin{eqnarray}
\nonumber   &&\hspace{-1cm}  
\frac n {t_1-t_1^*} \Big \{ J_n(K^*,t_1,(\widetilde d_1,\widetilde d_2),m)-
J_n(K^*,t_1^*,(\widehat d_1^*,\widehat d_2^*),m) \Big \} \\
\nonumber && \hspace{-0.2cm} \geq 2(d_1^*-d_2^*)-\log \big (1+2(d_1^*-d_2^*) \big ) +\frac n {m(t_1-t^*)} \,   O_P\Big ( \frac {m^{1+\beta_2^*} } {n^{\beta_2^*} }+ \sqrt m \Big )\\
\nonumber && \hspace{-0.2cm} \geq 2(d_1^*-d_2^*)-\log \big (1+2(d_1^*-d_2^*)\big ) + O_P\Big ( \frac 1 {\delta}+ \frac {\sqrt m}{\delta} \big ( \frac {m } n \big )^{\beta_2^*} \Big ).
\end{eqnarray}
As $m =o\big ( n^{2\beta_2^*/(1+2\beta_2^*)} \big)$ then $\sqrt m \big ( \frac {m } n \big )^{\beta_2^*}=o(1)$. As a consequence, we finally obtain: 
\begin{equation}
\label{J3} 
\frac n {t_1-t_1^*} \Big \{ J_n(K^*,t_1,(\widetilde d_1,\widetilde d_2),m)-
J_n(K^*,t_1^*,(\widehat d_1^*,\widehat d_2^*),m) \Big \} \geq 2(d_1^*-d_2^*)-\log \big (1+2(d_1^*-d_2^*) \big )+ O_P\Big ( \frac 1 {\delta} \Big ).
\end{equation}
As $\log(1+x)<x$ for any $x \in (-1,0)\cup (0,1)$, and since $d_1^*-d_2^* \neq 0$, we obtain that 
$$
\lim _{\delta \to \infty} \P \Big ( \frac n {t_1-t_1^*} \Big \{ J_n(K^*,t_1,(\widetilde d_1,\widetilde d_2),m)-
J_n(K^*,t_1^*,(\widehat d_1^*,\widehat d_2^*),m) \Big \} <0 \Big ) =0 
$$
and therefore from \eqref{prob1} we deduce \eqref{conv1} and therefore the proof of Theorem \ref{theo2} is achieved.
\end{proof}
\begin{proof}[Proof of Theorem \ref{theo2bis}]
Using Theorem \ref{theo2}, we can establish that $\widetilde d_i=\widehat d_i^*+  O_P\big (m^{-1/2} \big )$. Indeed, once again without lose of generality, we can consider the case of one change. Using the notation and proof of Theorem \ref{theo2}, if we assume $\widetilde t_1>t_1^*$, knowing $\widetilde t_1-t_1^*\leq C\,\frac n {\sqrt m}$, then $T_2 \subset T_2^*$ and therefore we can again write \eqref{TLCd2} and then $|\widetilde d_2-\widehat d_2^*|= O_P\big (m^{-1/2} \big )$. \\
Concerning $\widetilde d_1$ and with the knowledge that $\widetilde t_1$ is such as $0\leq \widetilde t_1-t_1^*\leq C\,\frac n {\sqrt m}$, we can write that $\widetilde d_1=\argmin_{d \in [0,0.5)} W_n(\{1,\ldots,\widetilde t_1\},d,m)$. But using computations of Theorem \ref{theo2}, we have 
\begin{eqnarray*}
W_n(\{1,\ldots,\widetilde t_1\},d,m)&=&\log \Big ( \frac {t_1^*} {\widetilde t_1}  \, S_n \big (T_1^*,d,m\big)+\frac {\widetilde t_1-t_1^*} {\widetilde t_1} \,  S_n\big(\{t_1^*+1,\ldots,\widetilde t_1\},d,m\big) \\
&& \hspace{2cm}+ \frac 2 {\widetilde t_1} \,  R_n\big(\{1,\ldots,t^*_1\},\{t_1^*+1,\ldots,\widetilde t_1\},d,m\big)\Big ) +2 d\, \ell(m) \\
&=&\log \Big ( S_n \big (T_1^*,d,m\big) \Big )+ D_{m,n,d} \, \Big ( \frac {\widetilde t_1-t_1^*}{t_1^*}\Big ) +2 d\, \ell(m)+\log(t_1^*/\widetilde t_1) \\
&=& W_n(T_1^*,d,m)+D_{m,n,d} \, \Big ( \frac {\widetilde t_1-t_1^*}{n}\Big ),
\end{eqnarray*}
where $\sup_{d\in [0,1/2)} |D_{m,n,d}| =O_P(1)$  using Lemmas \ref{lem0} and \ref{lem1} and because we have $t_1^*=[n\tau_1^*]$. Now, since $\widehat d_1^*=\argmin_{d \in [0,0.5)} W_n(T_1^*,d,m)$ and $d \in [0,1/2) \mapsto W_n(T,d,m)$ is a ${\cal C}^1([0,1/2))$ function, we deduce that $\widetilde d_1=\widehat d_1^*+ \frac 1 n \, O_P\big (|\widetilde t_1-t_1^*| \big )=\widehat d_1^*+ O_P\big (m^{-1/2} \big )$. This achieves the proof of Theorem \ref{theo2bis}.
\end{proof}

\begin{proof}[Proof of Theorem \ref{theo3}]
Obiously, the proof is established if for any $K \in \big \{0,\ldots, K^*-1,K^*+1,\ldots,K_{\max} \big \}$ the following consistency holds:
\begin{equation}\label{convK}
\P \Big ( J_n(K,{\bf t}, {\bf d},m)-J_n(K^*, {\bf t^{*}},{\bf d^{*}},m) <0\Big ) \limiten 0,
\end{equation}
for any ${\bf t}$ and ${\bf d}$, with $J_n$ defined as in \eqref{defJ}. Indeed, as $J_n(K^*,\widehat {\bf t^{*}},\widehat {\bf d^{*}},m) \leq J_n(K^*, {\bf t^{*}},{\bf d^{*}},m)$ by definition, \eqref{convK} is also satisfied by replacing $J_n(K^*, {\bf t^{*}},{\bf d^{*}},m)$ by $J_n(K^*,\widehat {\bf t^{*}},\widehat {\bf d^{*}},m)$. 
We decompose the proof in two parts, $K<K^*$ and $K>K^*$. \\
~\\
{\bf Assume {\bf $K<K^*$}}. Then, for any ${\bf t}$ and ${\bf d}$, and using \eqref{U2},
\begin{eqnarray*}
J_n(K,{\bf t}, {\bf d},m)-J_n(K^*, {\bf t^{*}},{\bf d^{*}},m)&& \\
&& \hspace{-5cm}=\frac 1 n \, \sum_{k=1}^{K+1} n_k \,  \log \Big (\sum_{j=1}^{K^*+1} \frac { n_{kj}}{n_k} \, S_n(T_{kj},d_k,m)+  \frac 2 {n_k} \,\sum_{j=1}^{K^*+1} \sum_{j'=1,~j\neq j}^{K^*+1}R_n(T_{kj},T_{kj'},d_k,m) \Big ) \\
&& \hspace{-4cm} -\frac 1 n \, \sum_{j=1}^{K+1} n_j^*  \, \log \big ( S_n(T^*_j,d_j^*,m)\big ) +2 \, \frac {\ell(m)} n \, \Big ( \sum_{k=1}^{K+1} n_k d_k-\sum_{j=1}^{K^*+1} n_j^*d^*_j \Big )+(K-K^*)z_n\\
&& \hspace{-5cm}\geq \sum_{k=1}^{K+1}  \sum_{j=1}^{K^*+1} \frac {n_{kj}}n \,  \big ( s(d_j^*,d_k)-s(d_j^*,d_j^*) \big )-|D(m,n)|+(K-K^*)z_n \\
&& \hspace{-5cm}\geq \sum_{k=1}^{K+1}  \sum_{j=1}^{K^*+1} \frac {n_{kj}}n \,  \big ( u(d_j^*,d_k)-u(d_j^*,d_j^*) \big ) -|D(m,n)|+(K-K^*)z_n
\end{eqnarray*}
since $\sum_{j=1}^{K^*+1} n_{kj}=n_k$ and  $\sum_{k=1}^{K+1} n_{kj}=n_j^*$ and using \eqref{s} with $D(m,n) \limiteprobanm 0$ and $u(d_j^*,d)= - \log \big (1+2d-2d_j^* \big )+2d \geq 2d_j^*$.\\
Now, we use again Lemma 2.3 of Lavielle (1999, p. 88). This Lemma was obtained when $K=K^*$ and we obtain that there exist $C_d>0$ such as 
$$
\sup_{{\bf d}\in ,{\bf t}\in }\sum_{k=1}^{K^*+1}  \sum_{j=1}^{K^*+1} \frac {n_{kj}}n \,  \big ( u(d_j^*,d_k)-u(d_j^*,d_j^*) \big )\geq  C_d \, \frac 1 n \, \| {\bf t}-{\bf t^*}\|_\infty
$$
where $\| {\bf t}-{\bf t^*}\|_\infty =\max_{1 \leq j \leq K^*} |t_j-t_j^*|$. However this result is still valide when  $K^*$ is replaced by $K<K^*$ in the first sum, since it is sufficient to add $K^*-K$ fictive times and consider $t_{K+1}=t_{K+2}=\cdots =t_{K^*}=t_K$ (and therefore $n_{kj}=0$ for $k=K+2,\ldots,K^*+1$. Therefore we obtain:
\begin{equation} \label{InegJ}
J_n(K,{\bf t}, {\bf d},m)-J_n(K^*, {\bf t^{*}},{\bf d^{*}},m) \geq \frac 1 3 \, \min_{1\leq i \leq K^*} |\tau_{i+1}^*-\tau_i^*|  -|D(m,n)|+(K-K^*)z_n
\end{equation}
since $K<K^*$ and therefore $\| {\bf t}-{\bf t^*}\|_\infty \geq \frac 1 2 \, \min_{1\leq i \leq K^*} |t_{i+1}^*-t_i^*| \geq \frac n 3 \, \min_{1\leq i \leq K^*} |\tau_{i+1}^*-\tau_i^*| $ when $n$ is large enough. Therefore, if $z_n \limiten 0$ then \eqref{convK} is satisfied and therefore $\P \big ( \widehat K<K^*) \limiten 0$.  
~\\
\noindent {\bf Assume {\bf $K^*<K\leq K_{\max}$}}. With  ${\bf  \widehat{t}} = (\widehat{t}_{1}, \ldots,
\widehat{t}_{K}) $, there exists some subset
$ \{ k_j ~, 1
 \leq j \leq K^* \} $ of $ \{ 1, \ldots,  K \}  $ such that for any $ j = 1, \ldots,  K^* $, $\big |\frac {\widehat t_{k_j}} n-\tau^*_j \big | =O_P\Big (\frac {1}  {\sqrt m} \Big )$. To see this, consider the $({\widehat{t}_{k_j}})$ as the closest times among $(\widehat{t}_{1}, \ldots,
\widehat{t}_{K}) $ to the $(t_1^*,\ldots,t_{K^*}^*)$. The other $K-K^*$ change dates $\widehat t_i$ could be consider exactly as additional ``false'' changes (since the parameters $d$ do not change at these times) and therefore the $\widehat t_{k_j}$ minimize $ J_n(K,{\bf t}, {\bf d},m)$ conditionally to those $\widehat t_i$ with $i \notin \{k_1,\ldots,k_{K^*}$ as if the number of changes is known and is $K^*$. And therefore Theorem \ref{theo2} holds for those $\widehat t_{k_j}$. \\
Then using the previous expansions detailed in the previous proofs, we obtain 
\begin{eqnarray*}
J_n(K,{\widehat {\bf t}}, {\widehat {\bf d}},m) -J_n(K^*, {\bf t^{*}},{\bf d^{*}},m)  \\
&& \hspace{-5cm}=\frac 1 n \, \sum_{j=1}^{K^*+1} \Big ( \sum_{k=k_j+1}^{k_{j+1}} \widehat n_k \,  \log \big (S_n(\widehat T_{k},\widehat d_k,m)\big ) - n_j^*  \, \log \big ( S_n(T^*_j,d_j^*,m)\big )\Big ) \\
&& \hspace{-0cm}+2 \, \frac {\ell(m)} n \, \Big ( \sum_{i=1}^{K+1} \widehat n_i \widehat d_i-\sum_{j=1}^{K^*+1} n_j^*d^*_j \Big )+(K-K^*)z_n\\
&& \hspace{-5cm}\geq \frac 1 n \, \sum_{j=1}^{K^*+1} \Big ( \sum_{k=k_j+1}^{k_{j+1}} \widehat n_k \,  s(d_j^*,\widehat d_k)- n_j^*  \, s(d_j^*,d_j^*)\Big )-|D(m,n)| \\
&& \hspace{-0cm}+2 \, \frac {\ell(m)} n \, \Big ( \sum_{i=1}^{K+1} \widehat n_i \widehat d_i-\sum_{j=1}^{K^*+1} n_j^*d^*_j \Big ) +(K-K^*)z_n \\
&& \hspace{-5cm}\geq \frac 1 n \, \sum_{j=1}^{K^*+1} \Big ( \sum_{k=k_j+1}^{k_{j+1}} \widehat n_k \,  s(d_j^*,\widehat d_k)- n_j^*  \, s(d_j^*,d_j^*)\Big )-|D(m,n)| \\
&& \hspace{-0cm}+2 \, \frac {\ell(m)} n \, \Big ( \sum_{i=1}^{K+1} \widehat n_i \widehat d_i-\sum_{j=1}^{K^*+1} n_j^*d^*_j \Big ) +(K-K^*)z_n
\end{eqnarray*}
with $s$ defined in \eqref{s}. Now, since $\widehat T_k \subset \big \{\widehat t_{k_j+1},\ldots,\widehat t_{k_{j+1}} \big \}$, we have from Theorem \ref{theo3}, $\widehat d_k=d_j^*+ O \Big (\frac {1} {\sqrt {  m }}  \Big )$. As a consequence, for $k=k_j+1, \ldots,k_{j+1}$ then $s(d_j^*,\widehat d_k)=s(d_j^*,d_j^*)+ O_P \Big (\frac {1} {\sqrt {  m }}  \Big )$. Then,
\begin{eqnarray*}
J_n(K,{\widehat {\bf t}}, {\widehat {\bf d}},m) -J_n(K^*, {\bf t^{*}},{\bf d^{*}},m)  \\
&& \hspace{-5cm}\geq \frac 1 n \, \sum_{j=1}^{K^*+1} \Big ( s(d_j^*,d_j^*) + 2 \, d_j^*\frac {\ell(m)} n \Big )\, \Big ( \sum_{k=k_j+1}^{k_{j+1}} \widehat n_k - n_j^*  \Big )-|D(m,n)|-|E(m,n)|  +(K-K^*)z_n\\
&& \hspace{-5cm}\geq  -|D(m,n)|-|E'(m,n)|  +(K-K^*)z_n,
\end{eqnarray*}
with $D(m,n)=O_P \Big (\frac {1} {\sqrt {  m }}  \Big )$ under condition $m =o\big ( n^{2\underline \beta^*/(1+2\underline \beta^*} \big)$ from the proof of Theorem \ref{theo2}, $E(m,n)=O_P \Big (\frac {1} {\sqrt {  m }}  \Big )$ and therefore $E'(m,n)=O_P \Big (\frac {1} {\sqrt {  m }}  \Big )$ since $\Big |\sum_{k=k_j+1}^{k_{j+1}} \widehat n_k - n_j^* \Big | =O_P\Big (\frac {n }  {\sqrt m} \Big )$.\\
As a consequence if $(z_n)$ is such that $z_n \,\sqrt {  m } \limiten \infty$ then for any $K> K^*$,
$$
\P \Big (J_n(K,{\widehat {\bf t}}, {\widehat {\bf d}},m) -J_n(K^*, {\bf t^{*}},{\bf d^{*}},m)<0 \Big )\limiten 0.
$$
This achieves the proof.
\end{proof}

\begin{proof}[Proof of Corollary \ref{cor1}] The results are easily obtained by considering conditional probability with respect to the event $\widehat K =K^*$.
\end{proof}

\section*{Aknowledgement} 
The authors thank the Associate Editor and the referees for their fruitful corrections, comments and suggestions, which notably improved the quality of the paper. 
\section{References}

\end{document}